\documentclass[11pt]{article}
\usepackage{amsmath, amssymb, amsthm, amsfonts}
\usepackage{indentfirst}
\usepackage[mathscr]{euscript}
\usepackage{amscd}
\usepackage{color}
\usepackage[pdftex]{graphicx}

\numberwithin{equation}{section}
\numberwithin{figure}{section}

\theoremstyle{plain}
\newtheorem{theorem}{Theorem}\numberwithin{theorem}{section}
\newtheorem{lemma}{Lemma}\numberwithin{lemma}{section}
\numberwithin{proposition}{section}
\newtheorem{corollary}{Corollary}\numberwithin{corollary}{section}
\theoremstyle{definition}
\numberwithin{definition}{section}
\theoremstyle{remark}
\newtheorem{remark}{Remark}\numberwithin{remark}{section}

\newcommand{\R}{\mathbb{R}}

\newcommand{\po}{\mathcal{P}_{1, 2}}
\newcommand{\J}{\mathbb{J}}
\newcommand{\F}{{}_1F_2}

\title{Newton diagram of positivity for \\$\F$ generalized hypergeometric functions}

\author{Yong-Kum Cho and Hera Yun}

\date{}

\begin{document}

\maketitle

\bigskip

\noindent
{\bf Abstract.} As for the positivity of $\F$ generalized hypergeometric functions,
we present a list of necessary and sufficient conditions in terms of parameters and determine the region of
positivity by certain Newton diagram.

\bigskip

\noindent
{\bf Keywords.} Bessel function, fractional integral,
generalized hypergeometric function, Newton diagram, positivity.

\bigskip

\noindent
{\small {\bf  2010 Mathematics Subject Classification}: 33C20, 46E22, 62D05.}

\section{Introduction}
In this paper we are concerned with the problem of positivity for the
generalized hypergeometric functions of type
\begin{equation}\label{I1}
\F\left(a\,; b, c\,; \,-\frac{\,x^2}{4}\right)
= \sum_{k=0}^\infty \frac{(a)_k}{k! (b)_k (c)_k}\left(-\frac {\,x^2}{4}\right)^{k}\qquad(x\in\R),
\end{equation}
where the parameters $\,a, b, c\,$ are positive and the coefficients are written
in Pochhammer's symbols, that is,
$\,(\alpha)_k = \Gamma(\alpha +k)/\Gamma(\alpha)\,$ for any $\,\alpha>0.$

In view of symmetry about zero, it will be assumed throughout $\,x>0.\,$ To simplify our notation,
we shall denote by $\po$ the set of all triples $\,(a, b, c)\,$ for which the corresponding generalized
hypergeometric functions are strictly positive on the interval $(0, \infty)$, that is,
\begin{equation}\label{I2}
\mathcal{P}_{1, 2} = \left\{(a, b, c)\in\R_+^3:
\F\left(a\,; b, c\,; -\frac{\,x^2}{4}\right)>0\,\right\}\,.
\end{equation}

Due to a variety of applications, the problem has been a historic issue. In classical analysis,
it arises frequently in connection with the integrals or sums of those special functions
expressible in terms of the generalized hypergeometric functions of type \eqref{I1}.
While we refer to Askey \cite{As2}, Fields and Ismail \cite{FI},
Gasper \cite{Ga1} for further backgrounds and references, we state some of earlier results
relevant to the present work.

\begin{itemize}
\item[(i)] For $\,\alpha>-1,\,$ if $J_\alpha$ stands for the Bessel function of order $\alpha$,
then
\begin{align}\label{R0}
\int_0^x J_\alpha(t) t^{-\beta} dt &= \frac{x^{\alpha-\beta+1}}{2^\alpha(\alpha-\beta+1)\Gamma(\alpha+1)}\nonumber\\
&\,\,\times\, \F\left(\frac{\alpha-\beta+1}{2}\,; \alpha+1,\,
\frac{\alpha-\beta+3}{2}\,;\,-\frac{\,x^2}{4}\right),
\end{align}
where $\,\alpha-\beta+1>0\,$ and
$B$ denotes Euler's beta function. For its positivity,
Cooke \cite{C} proved the case $\,\beta=0\,,$ which is equivalent to
\begin{equation}\label{R1}
(a, \,a+1, \,2a)\in\po \qquad(a>0),
\end{equation}
and Makai \cite{M} proved the case $\,\beta= -1/2,\,\alpha>1/2,\,$ that is,
\begin{equation}\label{R2}
\left(a, \, a+1, \, 2a - \frac 12\right)\in\po \qquad(a>1).
\end{equation}
\item[(ii)] In connection with completely monotone functions of certain type and
the positivity of Ces\'aro means of Jacobi series, Askey and Pollard \cite{AP} and
Fields and Ismail \cite{FI} proved separately
\begin{align}
\left(a, \, 2a, \, 2a + \frac 12\right) &\in\po\qquad(a>0),\label{R3}\\
\left(a, \, \rho a, \, \rho a + \frac 12\right) &\in\po \qquad (a>1,\,\rho\ge 3/2).\label{R4}
\end{align}
\item[(iii)] Regarding the sign of Lommel's function, Steinig \cite{St} proved
\begin{equation}\label{R5}
\left(1,\, b, \,\frac 72 -b\right)\in\po\quad\left(3/2<b<2\right).
\end{equation}
\item[(iv)] By considering certain fractional integral transforms with
the squares of Bessel functions as kernels, Gasper \cite{Ga1} established
\begin{equation}\label{R6}
\left(a, \,a+ \frac 12 + \delta,\,2a\right)\in\po\qquad(a>0,\,\delta>0),
\end{equation}
which includes \eqref{R1}, \eqref{R3} as a special case $\,\delta = 1/2,\,a,\,$ respectively.
In addition, as it will be explained below,
Gasper invented a series expansion method for investigating positivity and obtained a number of positivity
results for the Bessel integrals of certain type.
\end{itemize}

Our aim here is to determine the general patterns of parameters for positivity instead of
special cases and provide information on the location of the first zeros when positivity breaks down.
Our approach is based on Gasper's method from a slightly different viewpoint and integral transforms
with the squares of Bessel functions as its kernels.

To describe briefly our main result, we shall decompose the first quadrant into three disjoint regions
$\,\R_+^2 = P_a\cup N_a\cup O_a\,$ for each $\,a>0\,$ and prove that if $\,(b, c)\in P_a\,,$ then
$\,(a, b, c)\in\po\,$ and if $\,(b, c)\in N_a\,,$ then the corresponding generalized hypergeometric function
alternates in sign. It turns out that the positivity region $P_a$ coincides with certain Newton diagram
and the missing region $O_a$ consists of two strips cut by the line of necessity.

\section{Preliminaries}
The generalized hypergeometric functions of type \eqref{I1} includes a special
class of Bessel functions defined by
\begin{align}\label{N1}
\J_\alpha(x) = {}_0F_1\left(\alpha +1\,; -\frac{\,x^2}{4}\right)
= \Gamma(\alpha+1)\left(\frac x2\right)^{-\alpha}J_\alpha(x)
\end{align}
for $\,\alpha>-1,\,$ which will play fundamental roles in what follows.

As $\J_\alpha$ and $J_\alpha$ share positive zeros in common, it follows readily
from the theory of Bessel functions (see \cite{E}, \cite{L}, \cite{Wa}) that $\J_\alpha$ possesses
infinitely many positive zeros all of which are simple and the zeros of $\J_\alpha$ and $\J_{\alpha+1}$ are interlaced.
Moreover, if $j_\alpha$ denotes the smallest positive zero of $J_\alpha$, then
\begin{align}
\frac{(3+2\alpha)\pi}{4} &<j_\alpha<\frac{(7+2\alpha)\pi}{8}\quad\text{for}\quad |\alpha|<1/2,\nonumber\\
\sqrt{\alpha(\alpha +2)\,} &< j_\alpha < \sqrt{2(\alpha +1)(\alpha +3)\,}\quad\text{for}\quad \alpha>0.
\end{align}

If $2\alpha$ is an odd integer, then $\J_\alpha$ is expressible in terms
of algebraic and trigonometric functions via the recurrence relation
\begin{align}
&\quad \mathbb{J}_{-\frac 12}(x) = \cos x\,,\quad \mathbb{J}_{\frac 12}(x) = \frac{\sin x}{x},\nonumber\\
&\mathbb{J}_{\alpha+1}(x) = \frac{\,4\alpha(\alpha+1)\,}{x^2}\left[\mathbb{J}_\alpha(x) -\mathbb{J}_{\alpha-1}(x)\right].
\end{align}

Another special class of functions is the squares of $\J_\alpha$ in the form
\begin{align}
\mathbb{J}_\alpha^2\left(\frac x2\right) =\F\left(\alpha + \frac 12\,;\alpha +1, \,2\alpha +1 \,; - \frac{\,x^2}{4}\right)
\end{align}
for $\,\alpha>-1/2,\,$ which will play decisive roles in determining positivity.

\section{Necessity}
It is simple to extend a known positivity result to other parameter cases
on consideration of the fractional integral transform with appropriate kernel. Although it is observed by several authors
(\cite{As2}, \cite{FI}, \cite{Ga1}), we make this basic but important aspect precise under
a slightly weaker condition.

\smallskip

\begin{lemma}\label{lemmaA} For $\,a>0,\,b>0,\,c>0,\,$ suppose that
$$\Phi(x) = \F\left(a\,; b, c\,; -\frac{\,x^2}{4}\right)\ge 0.$$
Then for any $\,0\le\gamma<a,\,\delta\ge 0,\,\epsilon\ge 0,\,$
not simultaneously zero,
$$\,\left(a-\gamma, \,b+\delta, \,c+\epsilon\right)\in
\mathcal{P}_{1, 2}\,.\,$$
\end{lemma}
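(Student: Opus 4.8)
The plan is to realize the shift $(a,b,c)\mapsto(a-\gamma,b+\delta,c+\epsilon)$ as a finite composition of one-variable fractional (Erd\'elyi--Kober type) integral transforms built from Euler's Beta integral, applied to the nonnegative function $\Phi$. First I would record the elementary identities: for $0<\gamma<a$,
\[
\F\!\left(a-\gamma; b, c; -\tfrac{x^2}{4}\right)
= \frac{\Gamma(a)}{\Gamma(\gamma)\,\Gamma(a-\gamma)}\int_0^1 t^{a-\gamma-1}(1-t)^{\gamma-1}\,\Phi(x\sqrt t)\,dt ,
\]
and, for $\delta>0$,
\[
\F\!\left(a; b+\delta, c; -\tfrac{x^2}{4}\right)
= \frac{\Gamma(b+\delta)}{\Gamma(\delta)\,\Gamma(b)}\int_0^1 t^{b-1}(1-t)^{\delta-1}\,\Phi(x\sqrt t)\,dt ,
\]
with the obvious analogue for the shift $c\mapsto c+\epsilon$ (replace $b,\delta$ by $c,\epsilon$). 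Each identity is proved by writing $\Phi(x\sqrt t)=\sum_k \tfrac{(a)_k}{k!\,(b)_k(c)_k}\big(-\tfrac{x^2}{4}\big)^k t^k$, interchanging summation and integration --- legitimate since for fixed $x$ the series is dominated uniformly in $t\in[0,1]$ by its value at $t=1$ --- and evaluating $\int_0^1 t^{\mu+k-1}(1-t)^{\nu-1}\,dt=\Gamma(\mu+k)\Gamma(\nu)/\Gamma(\mu+k+\nu)$, then simplifying via $\Gamma(\lambda+k)/\Gamma(\lambda)=(\lambda)_k$.

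Since a shift by a vanishing parameter is simply the identity and $\gamma,\delta,\epsilon$ are not all zero, $\F(a-\gamma; b+\delta, c+\epsilon; -x^2/4)$ is obtained from $\Phi$ by composing at least one and at most three of the transforms above (in any order; they commute, as is clear on Taylor coefficients). In each transform the kernel $t^{\mu-1}(1-t)^{\nu-1}$ is strictly positive on $(0,1)$, so the transform carries nonnegative functions to nonnegative functions; as $\Phi\ge 0$ on $[0,\infty)$ by hypothesis, it follows at once that $\F(a-\gamma; b+\delta, c+\epsilon; -x^2/4)\ge 0$ there.

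The remaining --- and only subtle --- point is to promote this to strict positivity on $(0,\infty)$. For this I would use that $\Phi$ is real-analytic on $\R$ (its power series in $x^2$ has infinite radius of convergence) with $\Phi(0)=1$, so $\Phi$ cannot vanish on any nondegenerate interval. Consider the first transform in the composition, say the $\gamma$-shift with $\gamma>0$ (the $\delta$- and $\epsilon$-cases being identical): were its value to vanish at some $x_1>0$, the nonnegative integrand $t^{a-\gamma-1}(1-t)^{\gamma-1}\Phi(x_1\sqrt t)$ would vanish for a.e.\ $t\in(0,1)$, whence $\Phi\equiv 0$ on $(0,x_1)$ by continuity, a contradiction. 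Thus after the first genuine transform the function is already strictly positive on $(0,\infty)$, and each subsequent transform, being a positive-weight average, preserves strict positivity there. This yields $\F(a-\gamma; b+\delta, c+\epsilon; -x^2/4)>0$ on $(0,\infty)$, i.e.\ $(a-\gamma, b+\delta, c+\epsilon)\in\po$. The main obstacle is precisely this last step: mere nonnegativity of $\Phi$ does not give strict positivity without invoking analyticity, together with the hypothesis that at least one parameter is actually moved.
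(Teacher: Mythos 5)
Your proof is correct and follows essentially the same route as the paper's: both realize each parameter shift as a fractional (Euler beta) integral transform with positive kernel applied to $\Phi$, and both upgrade nonnegativity to strict positivity by exploiting the analyticity of $\Phi$ (the paper via the isolated zeros of its entire extension, you via $\Phi(0)=1$ and the impossibility of $\Phi$ vanishing on an interval). The only cosmetic difference is your substitution $t\mapsto\sqrt{t}$ in the kernels and your explicit term-by-term derivation of the integral representations, which the paper takes as known.
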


\begin{proof}
Since the complex extension $\,z\mapsto \Phi(z),\,z\in\mathbb{C},\,$ is obviously entire,
$\Phi$ has only isolated zeros on the interval $(0, \infty)$. For each $\,x>0,\,$ hence, it follows from the
nonnegativity assumption that there exists a non-empty open interval $\,I_x\subset [0, 1]\,$
such that $\,\Phi(xt)>0\,$ for all $\,t\in I_x.\,$

For $\,\delta>0\,$ and $\,x>0,\,$ it is plain to evaluate and deduce
\begin{align*}
\F\left(a\,; b+\delta, \,c\,; -\frac{\,x^2}{4}\right)
&= \frac{2}{B(b, \delta)} \int_0^1 \Phi(xt) (1-t^2)^{\delta-1} t^{2b-1}\,dt\\
&\ge\frac{2}{B(b, \delta)} \int_{I_x}\Phi(xt) (1-t^2)^{\delta-1} t^{2b-1}\,dt\\
&>0.
\end{align*}
In view of the integral representation
\begin{align*}
&\F\left(a\,; b+\delta, \,c+\epsilon\,; -\frac{\,x^2}{4}\right)\nonumber\\
&\quad = \frac{2}{B(c, \epsilon)} \int_0^1 \F\left(a\,; b+\delta, c\,; -\frac{\,x^2t^2}{4}\right)
\,(1-t^2)^{\epsilon-1} t^{2c-1}\,dt
\end{align*}
for $\,\epsilon>0,\,$ it follows from the same analysis or directly from the continuity and positivity
of kernel that $\,(a, b+\delta, c+\epsilon)\in \mathcal{P}_{1, 2}\,$ for any $\,\delta\ge 0,\,\epsilon\ge 0\,$
unless $\,\delta=\epsilon=0.\,$ In the last place, if
$\,0<\gamma<a,\,\delta\ge 0,\,\epsilon\ge 0,\,$ then
\begin{align*}
&\F\left(a-\gamma\,; b+\delta, \,c+\epsilon\,; -\frac{\,x^2}{4}\right)
= \frac{2}{B(a-\gamma, \gamma)} \nonumber\\
&\quad\times\,\int_0^1 \F\left(a\,; b+\delta, c+\epsilon\,; -\frac{\,x^2t^2}{4}\right)
\,(1-t^2)^{\gamma-1} t^{2(a-\gamma)-1}\,dt,
\end{align*}
which implies $\,(a-\gamma, b+\delta, c+\epsilon)\in \mathcal{P}_{1, 2}\,$ by the same reasoning as above.
\end{proof}

With the aid of Lemma \ref{lemmaA}, an inspection on the asymptotic behavior
leads to the following necessary condition for positivity and information on the location of the first zero
on $(0, \infty)$ if violated. We recall that the smallest positive zero of the Bessel function $J_\alpha$ or $\J_\alpha$
is denoted by $j_\alpha$.

\smallskip

\begin{theorem}\label{theoremB0}
For $\,a>0,\,b>0,\,c>0,\,$ put
$$\Phi(x) = \F\left(a\,; b, \,c\,; -\frac{\,x^2}{4}\right)\qquad(x>0).$$
If $\,(a, b, c)\in\po,\,$ then necessarily
$\,b>a,\,c>a,\,b+c \ge 3a + \frac 12\,.$ Moreover, the following properties
on the sign and zeros of $\Phi$ hold true.
\begin{itemize}
\item[\rm(i)] If $\,0<b\le a\,$ or $\,0<c\le a,\,$ then $\Phi$ alternates in sign and has at least one zero on
the interval $\,\left(0, \,j_{c-1}\right]\,$ or $\,\left(0, \,j_{b-1}\right],$ respectively.
\item[\rm(ii)] Suppose that either $\,b\le a +\frac 12\,,\,c\le 2a\,$ or $\,b\le 2a\,,\,c\le a +\frac 12\,$
subject to the condition $\,b+c<3a + \frac 12.\,$
Then $\Phi$ alternates in sign and
has at least one zero on the interval $\,\left(0, \,2j_{a-\frac 12}\right).$
\end{itemize}
\end{theorem}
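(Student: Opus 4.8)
The plan is to verify the three necessary inequalities in turn. The rank conditions $b>a$ and $c>a$, together with the quantitative refinement (i), will follow from Lemma~\ref{lemmaA} and the reduction $\F\!\left(b;b,c;-\tfrac{x^2}{4}\right)={}_0F_1\!\left(c;-\tfrac{x^2}{4}\right)=\J_{c-1}$; the refinement (ii) will follow in the same way from the identity $\F\!\left(a;a+\tfrac12,2a;-\tfrac{x^2}{4}\right)=\J_{a-\frac12}^2(x/2)$ on a boundary locus; and the line of necessity $b+c\ge 3a+\tfrac12$ will be read off from the large-$x$ asymptotics of $\Phi$. To begin, suppose $0<b\le a$. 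If $b=a$, then $(b)_k$ cancels and $\Phi={}_0F_1\!\left(c;-\tfrac{x^2}{4}\right)=\J_{c-1}$ by \eqref{N1}, which vanishes and changes sign at $j_{c-1}$. If $0<b<a$, I apply the last integral identity in the proof of Lemma~\ref{lemmaA} with $\delta=\epsilon=0$ and $\gamma=a-b\in(0,a)$ — an identity valid with no positivity hypothesis — to obtain
\[
\J_{c-1}(x)=\F\!\left(b;b,c;-\tfrac{x^2}{4}\right)=\frac{2}{B(b,a-b)}\int_0^1\Phi(xt)\,(1-t^2)^{a-b-1}\,t^{2b-1}\,dt .
\]
Evaluating at $x=j_{c-1}$ kills the left side, whereas, since $\Phi(0)=1$ and $\Phi$ is continuous, the right side is strictly positive whenever $\Phi\ge 0$ on $(0,j_{c-1}]$. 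Hence $\Phi$ must be negative somewhere on $(0,j_{c-1}]$, which gives both $(a,b,c)\notin\po$ — so $b>a$ is necessary — and the sign/zero statement of (i). The case $0<c\le a$ is symmetric by the symmetry of $\F$ in $b,c$.

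Next I would prove (ii); say $b\le a+\tfrac12$ and $c\le 2a$, the other alternative being symmetric. Composing the two beta-integral identities of Lemma~\ref{lemmaA} — first raising the second parameter from $b$ to $a+\tfrac12$, then the third from $c$ to $2a$ — yields, with $\delta=a+\tfrac12-b\ge 0$, $\epsilon=2a-c\ge 0$ and $\delta+\epsilon=3a+\tfrac12-(b+c)>0$,
\[
\J_{a-\frac12}^2\!\left(\tfrac x2\right)=\F\!\left(a;a+\tfrac12,2a;-\tfrac{x^2}{4}\right)=C\iint_{(0,1)^2}\Phi(xst)\,(1-t^2)^{\delta-1}t^{2b-1}(1-s^2)^{\epsilon-1}s^{2c-1}\,ds\,dt
\]
for a positive constant $C$; when $\delta=0$ or $\epsilon=0$ one of the two integrations is dropped, but $\delta+\epsilon>0$ keeps at least one of them genuine. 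Since $\J_{a-\frac12}^2(x/2)$ vanishes at $x=2j_{a-\frac12}$ while $st<1$ off a set of measure zero, the argument of the previous paragraph applies verbatim and shows that $\Phi$ is negative somewhere in the open interval $\bigl(0,2j_{a-\frac12}\bigr)$, which is exactly (ii).

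There remains the line of necessity, which I view as the crux. Assume $b+c<3a+\tfrac12$. I would invoke the classical large-$x$ asymptotic expansion of $\F\!\left(a;b,c;-x^2/4\right)$ — obtainable from the Mellin–Barnes representation, or, for $c\ge a+1$, by fractional integration of \eqref{R0} together with the asymptotics of $\int_0^x J_\alpha(t)t^{-\beta}\,dt$ — in the form
\[
\Phi(x)=A\,x^{-2a}\bigl(1+O(x^{-2})\bigr)+B\,x^{-(b+c-a-\frac12)}\bigl(\cos(x-\varphi)+O(x^{-1})\bigr)\qquad(x\to\infty),
\]
with a phase $\varphi$, with $A=2^{2a}\,\Gamma(b)\Gamma(c)/\bigl(\Gamma(b-a)\Gamma(c-a)\bigr)$ (to be read as $0$ when $b-a$ or $c-a$ is a non-positive integer), and with the oscillatory coefficient $B\ne 0$. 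The algebraic term has exponent $2a$ and the oscillatory term has exponent $b+c-a-\tfrac12$; the hypothesis $b+c<3a+\tfrac12$ is precisely the inequality $b+c-a-\tfrac12<2a$, so the oscillatory term dominates and $\Phi(x)\sim B\,x^{-(b+c-a-\frac12)}\cos(x-\varphi)$ changes sign for arbitrarily large $x$. Hence $(a,b,c)\notin\po$, so $b+c\ge 3a+\tfrac12$ is necessary; combining the three steps, the complement of $\{\,b>a,\ c>a,\ b+c\ge 3a+\tfrac12\,\}$ is disjoint from $\po$.

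The main obstacle is the last step: one must check that the oscillatory exponent equals exactly $b+c-a-\tfrac12$ (and is not off by an integer) and that its coefficient $B$ is nonzero for every admissible $a,b,c>0$ — this is where essentially all of the analytic bookkeeping lies, and it calls for some care with the branches of the Gamma factors in $A$ when $b-a$ or $c-a$ is negative. Steps 1 and 2, by contrast, are soft: they use only Lemma~\ref{lemmaA}, the simplicity of the zeros of $\J_\alpha$, and the two reductions of $\Phi$ to $\J_{c-1}$ and to $\J_{a-\frac12}^2(\cdot/2)$ on the relevant boundary loci.
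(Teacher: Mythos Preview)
Your proof is correct and follows essentially the same line as the paper's: fractional-integral reduction to $\J_{c-1}$ for part~(i), to $\J_{a-\frac12}^2(\cdot/2)$ for part~(ii), and the large-$x$ asymptotics for the line $b+c\ge 3a+\tfrac12$. The only cosmetic differences are that the paper carries out (ii) in two sequential single integrals via the intermediate $\Psi(x)=\F\bigl(a;\,a+\tfrac12,\,c;\,-x^2/4\bigr)$ rather than your composed double integral, and that your worry about $B\ne 0$ is unnecessary since the paper's formula \eqref{I3} gives the oscillatory coefficient as $\Gamma(b)\Gamma(c)/\bigl(\sqrt\pi\,\Gamma(a)\bigr)$, which is manifestly nonzero for $a,b,c>0$.
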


\begin{proof}
For $\,(a, b, c)\in\po\,,$ if $\,0<b\le a,\,$ then Lemma \ref{lemmaA} implies
$$\F\left(b\,;\,b, c\,; -\frac{\,x^2}{4}\right) = \J_{c-1}(x)>0,$$
which contradicts the fact that the Bessel function $\J_{c-1}$
has infinitely many zeros on the interval $(0, \infty)$. Thus $\,b>a\,$ and
also $\,c>a\,$ by symmetry. In view of the asymptotic behavior (see e.g. Luke \cite{L})
\begin{align}\label{I3}
\Phi(x) &=\frac{\Gamma(b)\Gamma(c)}{\Gamma(b-a)\Gamma(c-a)}
\,\left(\frac x2\right)^{-2a} \left[ 1+ O\left(x^{-2}\right)\right]\nonumber\\
& + \,\,\frac{\Gamma(b)\Gamma(c)}{\sqrt \pi\,\Gamma(a)}
\,\left(\frac x2\right)^{-\sigma} \left[\cos\left( x - \frac{\pi\sigma}{2}\right)
+ O\left(x^{-1}\right)\right]
\end{align}
as $\,x\to\infty,\,$ where $\,\sigma = b+c -a - 1/2,\,$
it is immediate to observe
the condition $\, \sigma\ge 2a\,$ is necessary, that is, $\,b+c\ge 3a + 1/2\,.$

To prove part (i), assume first $\,b=a.\,$ Then $\,\Phi(x) = \J_{c-1}(x)\,$ and the assertion
is trivial. We next assume $\,0<b<a\,$ and represent
\begin{align*}
\J_{c-1}(x) =\frac{2 x^{2-2a}}{B(b, a-b)} \int_0^x\Phi(s) (x^2-s^2)^{a-b-1} s^{2b-1} ds
\end{align*}
for each $\,x>0.$ Evaluating at the first positive zero of $\J_{c-1}$, we have
$$0=\int_0^{j_{c-1}}\Phi(s) \left(j_{c-1}^2 -s^2\right)^{a-b-1} s^{2b-1} ds,$$
which, together with $\,\Phi(0)=1,\,$ implies that $\Phi$ must possess at least one zero on $\,\bigl(0,\,j_{c-1}\bigr)\,$
and alternate in sign. The case $\,0<c\le a\,$ is similar.

To prove part (ii), assume first $\,b=a+ \frac 12\,,\,0<c<2a.$ Since
\begin{align*}
\J_{a-\frac 12}^2\left(\frac x2 \right) &= \F\left(a\,; a+ \frac 12\,,\,2a\,; -\frac{\,x^2}{4}\right)\\
&=\frac{2 x^{2-4a}}{B(c, 2a-c)} \int_0^x\Phi(s) (x^2-s^2)^{2a-c-1} s^{2c-1} ds
\end{align*}
for each $\,x>0,$ we may conclude by the same reasoning as above that $\Phi$ alternates in sign and
has at least one zero on the interval $\,\left(0, \,2j_{a-\frac 12}\right).$
We next assume  $\,0<b<a+ \frac 12\,,\,0<c<2a.$ In this case, we set
$$\Psi(x) = \F\left(a\,; a+ \frac 12\,,\,c\,; -\frac{\,x^2}{4}\right)$$
and deduce the following integral representations
\begin{align*}
\Psi(x) &=
\frac{2 x^{1-2a}}{B\left(b, a+\frac 12 -b\right)} \int_0^x\Phi(s) (x^2-s^2)^{a-b- \frac 12} s^{2b-1} ds,\\
\J_{a-\frac 12}^2\left(\frac x2 \right) &=
\frac{2 x^{2-4a}}{B\left(c, 2a-c\right)}\int_0^x
\Psi(s)(x^2-s^2)^{2a-c- 1} s^{2c-1} ds
\end{align*}
for each $\,x>0.$ Evaluating at $\,x= 2j_{a-\frac 12}\,,$ the second identity implies that $\Psi$ must have
a zero on $\,\left(0, \,2j_{a-\frac 12}\right),$ say, at $\,x_0.$ Evaluating in turn at $\,x= x_0,\,$
the first identity implies that $\Phi$ must alternate in sign and have a zero on $(0, x_0).$ Thus we deduce the same
conclusion as before. In view of symmetry, the proof for part (ii) is now complete.
\end{proof}

\begin{remark}
As our proof and graphical simulations suggest,
it is plausible that {\it if $\,0<b<a,\,$ then $\Phi$ possesses infinitely many zeros and
the zeros of $\Phi$ and $\J_{c-1}$
are interlaced} but we were not able to prove or disprove.
\end{remark}

\medskip

By modifying the proof of Theorem \ref{theoremB0} in an obvious way, it is not hard to deduce the following
counterpart of Lemma \ref{lemmaA} useful in extending a known non-positivity result to other
parameter cases.

\smallskip

\begin{lemma}\label{lemmaC}
For $\,a>0,\,b>0,\,c>0,\,$ put
$$\Phi(x)=\F\left(a\,; b, c\,; -\frac{\,x^2}{4}\right).$$
If $\Phi$ alternates in sign and $z_1$ denotes the first zero on $(0, \infty)$,
then for any $\,\gamma\ge 0,\, 0\le \delta<b, \,0\le \epsilon<c,\,$ the function $\Psi$ defined by
$$\Psi(x)= \F\left(a+\gamma\,; b-\delta, \,c-\epsilon\,; -\frac{\,x^2}{4}\right)$$
also alternates in sign and possesses at least one zero on $(0, z_1].$
\end{lemma}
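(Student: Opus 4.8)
The plan is to mirror the structure of the proof of Theorem \ref{theoremB0}, but run each fractional-integral identity in the \emph{opposite} direction. The underlying principle is that a fractional integral transform with a strictly positive kernel cannot turn a sign-alternating function into a nonnegative one; equivalently, if $\Phi$ has a zero at some point $z$ that forces it to change sign there, then any function obtained from $\Phi$ by raising the lower parameters $b, c$ (integrating up) or by lowering $a$ must itself have a zero on $(0, z]$. Lemma \ref{lemmaC} is just the contrapositive read off the three integral representations already displayed in the proof of Lemma \ref{lemmaA}.

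First I would fix $a>0$, $b>0$, $c>0$ with $\Phi$ sign-alternating and first zero $z_1$, and reduce to three elementary moves whose composition yields the general statement: (a) decreasing $b$ to $b-\delta$ with $0\le\delta<b$; (b) decreasing $c$ to $c-\epsilon$ with $0\le\epsilon<c$; (c) increasing $a$ to $a+\gamma$ with $\gamma\ge0$. For move (c), the identity from the end of the proof of Lemma \ref{lemmaA},
\begin{align*}
\F\left(a\,; b, c\,; -\frac{x^2}{4}\right)
= \frac{2}{B(a, \gamma)}\int_0^1
\F\left(a+\gamma\,; b, c\,; -\frac{x^2t^2}{4}\right)(1-t^2)^{\gamma-1}t^{2a-1}\,dt,
\end{align*}
shows that if $\F(a+\gamma; b, c; -x^2t^2/4)>0$ for all $t\in(0,1)$ and a given $x>0$, then $\Phi(x)>0$; hence if $\F(a+\gamma;b,c;-\,\cdot\,)$ were strictly positive on $(0, z_1]$, so would be $\Phi$ on $(0, z_1]$, contradicting that $z_1$ is a zero of $\Phi$ across which it changes sign. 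For moves (a) and (b) I would use, for each $x>0$, the representations already written in the excerpt, e.g.
\begin{align*}
\F\left(a\,; b, c\,; -\frac{x^2}{4}\right)
= \frac{2\, x^{1-2(b-\delta)}}{B(b-\delta,\delta)}\int_0^x
\F\left(a\,; b-\delta, c\,; -\frac{s^2}{4}\right)(x^2-s^2)^{\delta-1}s^{2(b-\delta)-1}\,ds,
\end{align*}
and the analogous one in the $c$-variable; evaluating at the first positive zero of the \emph{left} side and arguing exactly as in Theorem \ref{theoremB0}(i) forces the inner function to vanish somewhere on $(0, z_1)$ and to alternate in sign. Composing (a), then (b), then (c) — tracking that each intermediate function alternates in sign with first zero $\le z_1$ — gives $\Psi$ alternating with a zero in $(0, z_1]$; the boundary cases $\delta=0$, $\epsilon=0$ or $\gamma=0$ are trivial (the corresponding move is the identity).

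The only genuinely delicate point is the direction of implication in each step: from ``the outer function vanishes (with sign change) at $z$'' we must deduce ``the inner function vanishes on $(0, z]$'', which requires knowing that the inner function is \emph{real-analytic} (so it cannot vanish on a set of positive measure without being identically zero) and that it is not eventually of one sign — but the latter is exactly what we are proving, so the clean way is the integral-sign argument: if the inner function were $>0$ throughout $(0, z]$, the displayed identity with strictly positive kernel would force the outer function to be $>0$ at $z$, a contradiction. I expect the main obstacle to be purely bookkeeping: verifying that after move (a) the new function $\F(a; b-\delta, c; -\,\cdot\,)$ indeed alternates in sign (not merely has a zero) so that move (b) can be applied to it with the same type of argument, and similarly that its first zero has not moved to the right of $z_1$. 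This is handled by noting that the first-zero-of-the-outer-function argument produces a sign change of the inner function strictly before the zero of the outer function, so the first zeros form a nonincreasing chain $z_1 \ge z_1' \ge z_1'' $, and sign alternation propagates because a real-analytic function with $\Phi(0)=1>0$ and a sign change on a bounded interval, when integrated against a positive kernel to reproduce a sign-alternating function, cannot settle down to one sign.
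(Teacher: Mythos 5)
Your proposal is correct and is essentially the proof the paper has in mind: Lemma \ref{lemmaC} is stated as ``an obvious modification of the proof of Theorem \ref{theoremB0}'', and that modification is exactly what you carry out --- read the fractional-integral identities of Lemma \ref{lemmaA} in the reverse direction (so that $\Phi$ is a positive-kernel transform of $\Psi$ through the intermediate functions), evaluate each identity at the first zero of the outer function, and use $\Psi(0)=1$ together with analyticity to force a strict sign change of the inner function strictly before that zero, composing the three elementary moves. The only blemish is the prefactor in your move-(a) identity, which should be $x^{2-2b}$ rather than $x^{1-2(b-\delta)}$; since it is a positive constant times a positive power of $x$, this does not affect the sign argument.
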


\section{Integral transform method}
If we take the squares of Bessel functions in the form
$$\F\left(a\,; a+\frac 12\,,\,2a\,; -\frac{\,x^2}{4}\right) = \J_{a-\frac 12}^2
\left(\frac x2\right)\ge 0,$$
then it is straightforward to deduce by Lemma \ref{lemmaA} and Theorem \ref{theoremB0} the following positivity results
for which we shall omit proofs.

\smallskip

\begin{theorem}\label{theoremA}
For $\,a>0,\,$ if $\,b\ge a +\frac 12\,,\,c\ge 2a\,$ or $\,b\ge 2a\,,\,c\ge a +\frac 12\,$
subject to the condition $\,b+c>3a + \frac 12\,,$ then $\,(a, b, c)\in\po\,$
and the following additional properties hold true.
\begin{itemize}
\item[\rm(i)] $\,(a, b, 2a)\in\po\,$ if and only if $\,b>a+ \frac 12\,$ and
\begin{align*}
&\F\left(a\,; a+ \frac 12 +\delta,\,2a\,;
- \frac{\,x^2}{4}\right)\\
&\quad=\frac{2}{B\left(\delta, a+ \frac 12\right)} \int_0^1\J_{a-\frac 12}^2\left(\frac{xt}{2}\right) (1-t^2)^{\delta-1} t^{2a} dt>0\qquad(\delta>0).
\end{align*}
\item[\rm(ii)] $\,\left(a, a+ \frac 12, c\right)\in\po\,$ if and only if $\,c> 2a\,$ and
\begin{align*}
&\F\left(a\,; a+ \frac 12\,,\,2a+\epsilon\,;
- \frac{\,x^2}{4}\right)\\
&\quad= \frac{2}{B\left(\epsilon, 2a\right)} \int_0^1\J_{a-\frac 12}^2\left(\frac{xt}{2}\right) (1-t^2)^{\epsilon-1} t^{4a-1} dt>0\qquad(\epsilon>0).
\end{align*}
\end{itemize}
\end{theorem}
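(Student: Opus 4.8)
The plan is to derive the entire positivity region from a single application of Lemma~\ref{lemmaA} to the base identity
\[
\F\left(a\,; a+\tfrac12\,,\,2a\,; -\frac{\,x^2}{4}\right)=\J_{a-\frac12}^2\!\left(\frac x2\right)\ge 0 ,
\]
whose nonnegativity on $(0,\infty)$ is obvious since the right-hand side is a square. Applying Lemma~\ref{lemmaA} with $\gamma=0$ and letting $(\delta,\epsilon)$ range over $[0,\infty)^2\setminus\{(0,0)\}$ gives $\left(a,\,a+\tfrac12+\delta,\,2a+\epsilon\right)\in\po$; as $(b,c)$ runs over all pairs with $b\ge a+\tfrac12$ and $c\ge 2a$ other than the corner $\left(a+\tfrac12,\,2a\right)$ --- the exclusion being recorded precisely by $b+c>3a+\tfrac12$ --- this already yields the first of the two families in the hypothesis. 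Since $\F(a;b,c;\,\cdot\,)$ is symmetric in $b$ and $c$, positivity is invariant under interchanging the last two parameters, so the mirror family $b\ge 2a$, $c\ge a+\tfrac12$, $b+c>3a+\tfrac12$ is covered as well, which proves $(a,b,c)\in\po$.

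For the equivalence in part (i), write $\Phi(x)=\F(a;b,2a;-x^2/4)$. Sufficiency is the special case $\delta=b-a-\tfrac12>0$, $\epsilon=0$ of what was just established. For necessity, if $0<b<a+\tfrac12$ then $b\le a+\tfrac12$, $c=2a\le 2a$ and $b+c<3a+\tfrac12$, so Theorem~\ref{theoremB0}(ii) forces $\Phi$ to alternate in sign; and if $b=a+\tfrac12$ then $\Phi(x)=\J_{a-\frac12}^2(x/2)$ vanishes at every positive zero of $\J_{a-\frac12}$. Hence $(a,b,2a)\in\po$ exactly when $b>a+\tfrac12$, and part (ii) is obtained from part (i) by interchanging $b$ and $c$. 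The displayed integral formulas themselves are merely the specializations, to the present parameters, of the beta-type fractional integral identity already written down in the proof of Lemma~\ref{lemmaA}: raising $b$ from $a+\tfrac12$ to $a+\tfrac12+\delta$ turns the kernel exponent $2b-1$ into $2a$ and the base function into $\J_{a-\frac12}^2(xt/2)$, giving the formula in (i), while raising $c$ from $2a$ to $2a+\epsilon$ turns $2c-1$ into $4a-1$, giving the formula in (ii); the strict positivity of the right-hand sides for $x>0$ follows at once from the nonnegativity of the integrand and the fact that $\J_{a-\frac12}^2$ has only isolated zeros, which in turn re-proves positivity directly.

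I do not anticipate a genuine obstacle: the analytic content is entirely contained in Lemma~\ref{lemmaA} (sufficiency) and in Theorem~\ref{theoremB0} (the necessity hidden inside the two equivalences). The only delicate point is the bookkeeping of strict versus non-strict inequalities --- in particular, recognizing that the single boundary point $b+c=3a+\tfrac12$ of each family fails positivity precisely because it is the square case $\J_{a-\frac12}^2$, and that the two strips missing from the positivity region are exactly where Theorem~\ref{theoremB0}(ii) applies. This is why a detailed proof may safely be omitted from the main text.
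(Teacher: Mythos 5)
Your proof is correct and follows exactly the route the paper intends (and omits): sufficiency by applying Lemma~\ref{lemmaA} with base point $\bigl(a,\,a+\tfrac12,\,2a\bigr)$, i.e.\ the square $\J_{a-1/2}^2$, together with the $b\leftrightarrow c$ symmetry, and necessity in (i)--(ii) from Theorem~\ref{theoremB0}(ii) plus the observation that the corner case is a square with zeros. The displayed integral identities are, as you note, just the beta-type representations from the proof of Lemma~\ref{lemmaA} specialized to these parameters, so nothing is missing.
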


As noted earlier, the sufficiency of (i) owes to Gasper \cite{Ga1}.
Noteworthy are the following special cases.

\paragraph{(A1)} The choice $\,\epsilon = 1-a \,$ in (ii) yields
\begin{align}\label{A1}
&\F\left(a\,; a+ \frac 12\,,\,a+1\,;
- \frac{\,x^2}{4}\right)\nonumber\\
&\qquad= \frac{2}{B\left(1-a, 2a\right)} \int_0^1\J_{a-\frac 12}^2\left(\frac{xt}{2}\right)
\frac{t^{4a-1}\, dt}{(1-t^2)^a}>0 \quad(0<a<1).
\end{align}

As an application, we give an affirmative answer for an
open question of Fields and Ismail \cite{FI} regarding positivity of the generalized
hypergeometric function of the form \eqref{R4}
for $\,3/2<\rho<2\,$ at some value $\,0<a<1.$

\smallskip

\begin{corollary} For $\,\frac 32<\rho<2,\,$ there exists $\,\frac 12<a<1\,$ such that
\begin{align*}
\F\left(a \,;\rho a, \, \rho a + \frac 12\,; -\frac{\,x^2}{4}\right)>0.
\end{align*}
\end{corollary}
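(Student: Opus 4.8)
The plan is to exploit the integral representation \eqref{A1} together with the known positivity from Theorem \ref{theoremA}. Fix $\rho$ with $3/2 < \rho < 2$. The target triple is $(a, \rho a, \rho a + \tfrac12)$. The idea is to write $\rho a = a + \tfrac12 + \delta$ with $\delta = (\rho-1)a - \tfrac12$, so that for $a$ close to $1$ we have $\delta$ close to $\rho - \tfrac32 > 0$, hence $\delta > 0$ for $a$ sufficiently near $1$; simultaneously we want $c = \rho a + \tfrac12$ to relate to $a+1$ via part (A1). In fact the cleanest route is: first use Theorem \ref{theoremA}(i) (Gasper's part) to get $(a, a + \tfrac12 + \delta, 2a) \in \po$ whenever $\delta > 0$, and then apply Lemma \ref{lemmaA} to raise the third parameter from $2a$ to $\rho a + \tfrac12$, which is legitimate precisely when $\rho a + \tfrac12 \ge 2a$, i.e. $a \le \tfrac{1}{2(2-\rho)}$. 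Since $2 - \rho < \tfrac12$, this upper bound exceeds $1$, so there is room; but we also need $a < 1$ for consistency with the regime and $a > \tfrac12$ as claimed.

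First I would make the parameter bookkeeping precise. Set $c_0 = \rho a + \tfrac12$ and $b_0 = \rho a$. Writing $b_0 = a + \tfrac12 + \delta$ forces $\delta = (\rho - 1) a - \tfrac12$; this is positive iff $a > \tfrac{1}{2(\rho-1)}$. Because $3/2 < \rho < 2$ we have $\tfrac12 < \tfrac{1}{2(\rho-1)} < 1$, so the constraint $a > \tfrac{1}{2(\rho-1)}$ already places $a$ in the interval $(\tfrac12, 1)$ for $a$ near $1$ — in particular one can choose $a \in \bigl(\tfrac{1}{2(\rho-1)}, 1\bigr)$, which is a nonempty open subinterval of $(\tfrac12,1)$. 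With such an $a$, Theorem \ref{theoremA}(i) gives $(a, a + \tfrac12 + \delta, 2a) = (a, \rho a, 2a) \in \po$, i.e. the function with third parameter $2a$ is strictly positive on $(0,\infty)$.

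Next I would apply Lemma \ref{lemmaA} with $\gamma = 0$, $\delta_{\mathrm{Lem}} = 0$, and $\epsilon = c_0 - 2a = \rho a + \tfrac12 - 2a = (\rho - 2)a + \tfrac12$. Since $\rho < 2$ this is decreasing in $a$, and at $a = 1$ it equals $\rho - \tfrac32 > 0$; hence $\epsilon > 0$ for $a$ slightly below $1$, say for $a$ in some interval $(\tfrac{1}{2(\rho-1)}, a^\ast)$ with $a^\ast \le 1$ chosen so that $(\rho-2)a + \tfrac12 > 0$ there. Lemma \ref{lemmaA} (applicable since $\epsilon > 0$, the other increments being zero, so not all simultaneously zero) then yields $(a, \rho a, 2a + \epsilon) = (a, \rho a, \rho a + \tfrac12) \in \po$. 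Intersecting the two open conditions on $a$ — namely $a > \tfrac{1}{2(\rho-1)}$ and $a < \min(1, \tfrac{1}{2(2-\rho)})$ — gives a nonempty open subset of $(\tfrac12, 1)$, from which any $a$ works, completing the argument.

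The only genuine obstacle is checking that the two parameter constraints are jointly satisfiable within $(\tfrac12, 1)$: one needs $\tfrac{1}{2(\rho-1)} < 1$, equivalently $\rho > \tfrac32$, which holds by hypothesis, and $\tfrac{1}{2(\rho-1)} < \tfrac{1}{2(2-\rho)}$ is automatic since $\rho - 1 > 2 - \rho$ when $\rho > \tfrac32$; thus the interval $\bigl(\tfrac{1}{2(\rho-1)}, 1\bigr)$ is nonempty and every $a$ in it satisfies both $\delta > 0$ and $\epsilon > 0$. Everything else is a direct invocation of Theorem \ref{theoremA}(i) and Lemma \ref{lemmaA}; no new estimates are required. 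One should double-check whether the problem intends strict inequalities $b \ge a + \tfrac12$ versus $b > a + \tfrac12$ in Theorem \ref{theoremA}, but since $\delta > 0$ strictly we land safely in the interior of the hypothesis region, so the subtlety does not arise.
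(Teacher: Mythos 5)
Your argument is correct, but it takes a different route from the paper's one‑line proof. The paper simply substitutes the single value $a=\tfrac{1}{2(\rho-1)}$ into \eqref{A1}: at that value $\rho a=a+\tfrac12$ and $\rho a+\tfrac12=a+1$, so the triple collapses to $(a,a+\tfrac12,a+1)$, which \eqref{A1} covers for $0<a<1$, and $\tfrac{1}{2(\rho-1)}\in(\tfrac12,1)$ precisely because $\tfrac32<\rho<2$. You instead start from Theorem \ref{theoremA}(i) to get $(a,\rho a,2a)\in\po$ whenever $\rho a>a+\tfrac12$, and then push the third parameter up from $2a$ to $\rho a+\tfrac12$ via Lemma \ref{lemmaA}, checking that $\epsilon=(\rho-2)a+\tfrac12>0$ holds throughout $(0,1)$ since $\tfrac{1}{2(2-\rho)}>1$. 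Your bookkeeping is sound: the two constraints are jointly satisfiable on the nonempty interval $\bigl(\tfrac{1}{2(\rho-1)},1\bigr)\subset(\tfrac12,1)$, and Lemma \ref{lemmaA} applies with a single strictly positive increment. The trade‑off is that the paper's choice sits exactly on the boundary $b=a+\tfrac12$ (where Theorem \ref{theoremA}(i) would not apply, but \eqref{A1}, i.e.\ the $c$-direction of Theorem \ref{theoremA}(ii), does), producing one explicit witness, whereas your two‑step shift yields the stronger conclusion that positivity holds for every $a$ in the open interval $\bigl(\tfrac{1}{2(\rho-1)},1\bigr)$ — in fact for all $a>\tfrac{1}{2(\rho-1)}$ with $a<\tfrac{1}{2(2-\rho)}$ — which is more information than the corollary asks for.
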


\begin{proof} Take $\,a = 1/2(\rho -1)\,$ in \eqref{A1}.
\end{proof}

\paragraph{(A2)} The choice $\,a = 1\,$ in (ii) yields

\begin{align}\label{A2}
&\F\left(1\,; \frac 32\,,\, 2 + \epsilon\,; -\frac{\,x^2}{4}\right)\nonumber\\
&\qquad =\frac{8\epsilon(\epsilon+1)}{x^2}\int_0^1\sin^2\left(\frac{xt}{2}\right) (1- t^2)^{\epsilon-1} t\,dt
 >0 \qquad(\epsilon>0).
\end{align}

A rearrangement of parameters in the form $\,\epsilon +1/2 = \delta\,$
leads to a simple proof of positivity for Struve's function
(cf. Watson \cite{Wa}).

\begin{corollary}\label{corollaryH}
For $\,\delta>\frac 12\,$ and $\,x>0,$ Struve's function is positive
with
\begin{align*}
\mathbf{H}_\delta (x) &= \frac{2\left(x/2\right)^{\delta +1}}{\sqrt \pi\,\Gamma(\delta + 3/2)}\,
 \F\left(1\,; \frac 32,\,\frac 32 +\delta\,; \,-\frac{\,x^2}{4}\right)\nonumber\\
 &= \frac{(2\delta-1)(2\delta+1)\left(x/2\right)^{\delta -1}}{\sqrt \pi\,\Gamma(\delta + 3/2)}
 \int_0^1\sin^2\left(\frac{xt}{2}\right) (1- t^2)^{\delta - \frac 32}\,t\,dt>0.
 \end{align*}
\end{corollary}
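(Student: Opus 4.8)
The plan is to obtain both the asserted identity and the positivity by specializing the representation (A2), which is already established. In \eqref{A2} the free parameter $\epsilon$ ranges over $(0,\infty)$; I would set $\epsilon = \delta - 1/2$, so that the constraint $\epsilon>0$ becomes exactly $\delta>1/2$ and the lower parameter transforms as $2+\epsilon = 3/2+\delta$. Since $8\epsilon(\epsilon+1) = 8(\delta-\frac12)(\delta+\frac12) = 2(2\delta-1)(2\delta+1)$ and $(1-t^2)^{\epsilon-1} = (1-t^2)^{\delta-3/2}$, equation \eqref{A2} then reads
\[
\F\left(1\,;\frac 32\,,\,\frac 32+\delta\,; -\frac{\,x^2}{4}\right)
= \frac{2(2\delta-1)(2\delta+1)}{x^2}\int_0^1\sin^2\left(\frac{xt}{2}\right)(1-t^2)^{\delta-\frac 32}\,t\,dt .
\]
The hypothesis $\delta>1/2$ guarantees $\delta-3/2>-1$, so the integral converges; its integrand is nonnegative and strictly positive on a subinterval of $(0,1)$ for every $x>0$, while the prefactor $2(2\delta-1)(2\delta+1)/x^2$ is positive, so the right-hand side is strictly positive.

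Next I would identify the left-hand side with Struve's function. Recalling the classical power series
\[
\mathbf{H}_\delta(x) = \left(\frac x2\right)^{\delta+1}\sum_{k=0}^\infty\frac{(-1)^k\,(x/2)^{2k}}{\Gamma(k+3/2)\,\Gamma(k+\delta+3/2)},
\]
I would rewrite the hypergeometric series in \eqref{I1} using $(1)_k=k!$, $(3/2)_k=\Gamma(k+3/2)/\Gamma(3/2)$, $(3/2+\delta)_k=\Gamma(k+\delta+3/2)/\Gamma(\delta+3/2)$ and $\Gamma(3/2)=\sqrt\pi/2$; cancelling the common gamma factors gives
\[
\frac{2(x/2)^{\delta+1}}{\sqrt\pi\,\Gamma(\delta+3/2)}\,\F\left(1\,;\frac 32\,,\,\frac 32+\delta\,; -\frac{\,x^2}{4}\right) = \mathbf{H}_\delta(x),
\]
which is the first displayed equality of the corollary. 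Substituting the integral formula from the previous step and simplifying $(x/2)^{\delta+1}/x^2 = (x/2)^{\delta-1}/4$ then yields the second displayed equality, and the positivity of $\mathbf{H}_\delta(x)$ follows at once.

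I do not expect a genuine analytic obstacle: once (A2) is in hand, the corollary amounts to a change of parameter followed by bookkeeping with Pochhammer symbols and gamma functions. The only point to handle with care is the reconciliation of constants --- verifying that the two occurrences of $\sqrt\pi$ and $\Gamma(\delta+3/2)$ cancel and that $2(2\delta-1)(2\delta+1)/x^2$ multiplied by $2(x/2)^{\delta+1}/(\sqrt\pi\,\Gamma(\delta+3/2))$ collapses precisely to the coefficient $(2\delta-1)(2\delta+1)(x/2)^{\delta-1}/(\sqrt\pi\,\Gamma(\delta+3/2))$ stated in the corollary. This is a one-line check, and it simultaneously confirms that the normalization used here agrees with the standard definition of $\mathbf{H}_\delta$.
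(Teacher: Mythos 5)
Your proposal is correct and follows exactly the route the paper intends: substituting $\epsilon=\delta-\tfrac12$ into the representation \eqref{A2} (the paper's phrase ``a rearrangement of parameters in the form $\epsilon+1/2=\delta$''), checking that $\epsilon>0$ corresponds to $\delta>\tfrac12$, and reconciling the constants with the classical series for $\mathbf{H}_\delta$. The constant bookkeeping you carry out ($8\epsilon(\epsilon+1)=2(2\delta-1)(2\delta+1)$, $(x/2)^{\delta+1}/x^2=(x/2)^{\delta-1}/4$, and the cancellation of $\Gamma(3/2)=\sqrt\pi/2$ against the prefactor) all checks out.
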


\section{Gasper's series method}
In \cite{Ga1} Gasper discovered a series expansion formula for an arbitrary ${}_pF_{p+1}$
generalized hypergeometric function in terms of squares of Bessel functions,
which sets up a remarkably effective method for inspecting positivity
when the signs of coefficients are deterministic.

In a slightly different form, Gasper's formula asserts in effect
\begin{align}\label{G1}
&\F\left(a\,; b, \, c\,;\, -\frac{\,x^2}{4}\right)
=\mathbb{J}_\nu^2\left(\frac x2\right) + R_\nu(x),\nonumber\\
&\quad R_\nu(x) = \sum_{n=1}^\infty \frac{S(n, \nu)}{n!}\frac{2n+2\nu}{n+2\nu}
\frac{(2\nu+1)_n}{(\nu + 1)_n^2}\left(\frac x4\right)^{2n}\J^2_{n+\nu}\left(\frac x2\right),
\end{align}
where $\nu$ is any real such that $2\nu$ is not a negative integer and
\begin{equation}\label{G2}
S(n, \nu) = {}_4F_3\left(\begin{array}{c}
-n, \,\,n+2\nu, \,\,\nu+1, \,\,a\\
\nu +\frac 12, \,\,b, \,\,c \end{array}\biggl| 1
\right).
\end{equation}

In view of the interlacing property on the zeros of Bessel functions,
Gasper's formula \eqref{G1} can be interpreted as follows.
\begin{itemize}
\item[(i)] If the coefficients of $R_\nu$ are all positive, then $\,(a, b, c)\in\po\,$ with
\begin{equation}\label{G3}
\F\left(a\,; b, \, c\,;\, -\frac{\,x^2}{4}\right)> \J_\nu^2\left(\frac x2\right).
\end{equation}
\item[(ii)] If the coefficients of $R_\nu$ are all negative, then
\begin{equation}\label{G4}
\F\left(a\,; b, \, c\,;\, -\frac{\,x^2}{4}\right)<\J_\nu^2\left(\frac x2\right)
\end{equation}
so that it possesses at least one zero on the interval $\left(0, \,2j_{\nu}\right).$
\end{itemize}

To implement Gasper's series method, we shall reduce $S(n, \nu)$ to a ${}_3F_2$ terminating series for which
two types of summation formulas are available. For each integer $n$, Saalsch\"uz's formula asserts
\begin{align}\label{G5}
{}_3F_2\left(\begin{array}{c} -n,\,n+ \alpha,\,\beta\\ \gamma,\,1+\alpha +\beta -\gamma\end{array}
\biggl| 1\right) &= \frac{(\alpha + 1- \gamma)_n (\gamma-\beta)_n}{(\gamma)_n (1+\alpha +\beta -\gamma)_n}
\end{align}
and Watson's formula takes the form
\begin{align}\label{G6}
{}_3F_2\left(\begin{array}{c} -n,\,n+ 2\alpha,\,\beta\\ \alpha + 1/2\,,\,2\beta\end{array}
\biggl| 1\right) = \frac{(1/2)_k (\alpha +1/2 -\beta)_k}{(\alpha+1/2)_k (\beta +1/2)_k}
\end{align}
if $\,n=2k, \,k = 1, 2, \cdots, \,$ and zero otherwise. Both formulas are valid if
the denominators are not equal to zero (see Bailey \cite{B} and Luke \cite{L}).

On account of symmetry in $\,b, c,\,$ there are only three possible ways of reduction.
In the first place, if we take $\, \nu = a - 1/2\,$ and $\, b+c = 3a + 1/2,\,$
the boundary plane of the necessity region for positivity,
then our evaluation of $S(n, \nu)$ by Saalsch\"uz's formula \eqref{G5} gives the series development
\begin{align}\label{G7}
&\F\left(a\,; b, \, 3a + \frac 12 -b \,; -\frac{\,x^2}{4}\right)
\nonumber\\
&\qquad\quad =\J_{a-\frac 12}^2\left(\frac x2\right) +
\sum_{n=1}^\infty \frac{2n + 2a-1}{n+ 2a-1}\frac{(2a)_n}{n!\,(a+ 1/2)_n^2}
\nonumber\\
&\qquad\qquad\quad\times\,\,
\frac{(2a-b)_n \left(b-a- 1/2\right)_n}{(b)_n \left(3a+1/2-b\right)_n}
\left(\frac x4\right)^{2n} \J^2_{n+ a-\frac 12}\left(\frac x2\right).
\end{align}

\smallskip

\begin{theorem}\label{theoremB}
For $\,a>0,\,b>0,\,$ we have
\begin{equation}\label{G8}
\left(a, \,b, \,3a + \frac 12-b\right)\in\mathcal{P}_{1, 2}
\end{equation}
if and only if $\,a\ne 1/2\,$ and $b$ lies strictly between $\,a+1/2\,$ and $2a$. Moreover, the following
inequality holds true in such a case:
\begin{align}\label{G9}
\F\left(a\,; b, \, 3a + \frac 12 -b \,; -\frac{\,x^2}{4}\right)
>\J_{a-\frac 12}^2\left(\frac x2\right)
\end{align}
\end{theorem}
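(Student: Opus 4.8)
The plan is to combine the series development \eqref{G7} with the sign analysis of its coefficients, together with Theorem~\ref{theoremB0} and Lemma~\ref{lemmaC} to obtain the converse. First I would observe that along the line $b+c = 3a+\tfrac12$ the coefficient of $(x/4)^{2n}\J^2_{n+a-1/2}(x/2)$ in \eqref{G7} is, up to the manifestly positive factor $\frac{2n+2a-1}{n+2a-1}\frac{(2a)_n}{n!\,(a+1/2)_n^2}$, equal to
$$
c_n(a,b) \;=\; \frac{(2a-b)_n\,(b-a-1/2)_n}{(b)_n\,(3a+1/2-b)_n}.
$$
Since $a>0$ forces $b>0$ and $3a+\tfrac12-b>0$ (otherwise one of the denominator parameters is nonpositive and we are outside $\R_+^3$, or the function fails positivity trivially by Theorem~\ref{theoremB0}), the denominator Pochhammer symbols $(b)_n$ and $(3a+1/2-b)_n$ are strictly positive for every $n\ge1$. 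Hence the sign of $c_n(a,b)$ is governed entirely by the numerator $(2a-b)_n(b-a-1/2)_n$.

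The key step is then the elementary sign bookkeeping for $(2a-b)_n(b-a-1/2)_n$. If $b$ lies strictly between $a+\tfrac12$ and $2a$ — note this interval is non-empty precisely when $a\neq\tfrac12$, i.e. $a>\tfrac12$ — then $2a-b>0$ and $b-a-\tfrac12>0$, so both factors are products of positive terms and $c_n(a,b)>0$ for all $n\ge1$. By interpretation (i) of Gasper's formula this gives $(a,b,3a+\tfrac12-b)\in\po$ together with the strict inequality \eqref{G9}. For the converse direction I would argue as follows. If $b=a+\tfrac12$ or $b=2a$, then $b+c=3a+\tfrac12$ makes $(b,c)$ one of $(a+\tfrac12,2a)$ or $(2a,a+\tfrac12)$; these are exactly the boundary cases of Theorem~\ref{theoremB0}(ii) with equality $b+c=3a+\tfrac12$, which is \emph{not} covered by the strict hypothesis there, so I must handle them directly: when $b=a+\tfrac12$, $c=2a$ the function is literally $\J^2_{a-1/2}(x/2)\ge0$ but not strictly positive (it vanishes at $2j_{a-1/2}$), hence $(a,a+\tfrac12,2a)\notin\po$; similarly for the symmetric case. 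If instead $b<a+\tfrac12$ (so $c=3a+\tfrac12-b>2a$) or $b>2a$ (so $c<a+\tfrac12$), then $(b,c)$ satisfies the hypotheses of Theorem~\ref{theoremB0}(i) (if $b\le a$) or of a limiting/boundary version that I reduce to via Lemma~\ref{lemmaC}: starting from the non-positivity of $\J^2_{a-1/2}$ at the parameter point $(a,a+\tfrac12,2a)$ and decreasing the second or third parameter along the line (which is permitted by Lemma~\ref{lemmaC} since it only lowers $b$ or $c$), we conclude the function alternates in sign. Finally, the degenerate case $a=\tfrac12$ forces $b+c=2$ with $b,c>0$; here $a+\tfrac12=1=2a$, so $b$ cannot lie strictly between them, and one checks $S(n,0)$ directly (or invokes Theorem~\ref{theoremB0}) to see positivity fails except we must confirm no exceptional positive point survives — the necessity $b,c>a=\tfrac12$ combined with $b+c=2$ leaves the open segment, on which a short computation of $c_n$ shows the coefficients change sign.

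The main obstacle I anticipate is the converse (non-positivity) direction on the \emph{open} complementary segments $0<b<a+\tfrac12$ and $b>2a$ when $b$ is still $>a$ (so Theorem~\ref{theoremB0}(i) does not apply verbatim) and $b+c=3a+\tfrac12$ holds with equality (so Theorem~\ref{theoremB0}(ii), stated under the strict inequality $b+c<3a+\tfrac12$, also does not apply verbatim). Two routes are available: either re-run the fractional-integral argument of Theorem~\ref{theoremB0}(ii) allowing equality — evaluating $\J^2_{a-1/2}(x/2)$ at $x=2j_{a-1/2}$ and using that $\J^2_{a-1/2}$ vanishes there while $\Phi(0)=1>0$, which forces a sign change of $\Phi$ — or, more cleanly, read off non-positivity straight from \eqref{G7}: on these segments one of $2a-b$, $b-a-\tfrac12$ is negative and the other positive, so $c_n(a,b)=\frac{(2a-b)_n(b-a-1/2)_n}{(b)_n(3a+1/2-b)_n}$ is strictly negative for $n=1$ (and indeed for all $n$ until a Pochhammer factor in the negative-parameter numerator hits zero, after which it stays zero or flips according to parity — but the crucial point is $c_1<0$ and $c_n\le 0$ for small $n$ is not enough; I need \emph{all} coefficients $\le 0$ to invoke interpretation (ii)). To make interpretation (ii) applicable I would instead pass to the slightly larger parameter $b'$ or $c'$ inside the positivity interval, note $\Phi$ at the boundary is a monotone limit of the positive interior functions that increases to $\J^2_{a-1/2}$ only in the $L^1$-against-kernel sense, and then deduce the boundary function cannot be strictly positive because it is dominated by $\J^2_{a-1/2}(x/2)$, \emph{not} strictly, forcing equality of the leading asymptotics and hence — comparing with \eqref{I3} — the oscillatory term to dominate, producing zeros. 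I expect the cleanest write-up to use Lemma~\ref{lemmaC} to transport the known sign change of $\J^2_{a-1/2}$ (viewed as the ${}_1F_2$ with parameters $(a,a+\tfrac12,2a)$) to the interior-of-complement parameters, after checking that moving from $(a+\tfrac12,2a)$ to the relevant $(b,3a+\tfrac12-b)$ decreases exactly one coordinate and does not increase the other — which is immediate since the sum is fixed.
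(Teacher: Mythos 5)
Your starting point is the right one and is the same as the paper's: read the sign of the coefficients in the expansion \eqref{G7} and invoke the two interpretations \eqref{G3}, \eqref{G4} of Gasper's formula. But as written the proposal has two genuine gaps, plus a smaller slip.

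First, you misread the hypothesis ``$b$ lies strictly between $a+\tfrac12$ and $2a$.'' You assert this interval is non-empty only for $a>\tfrac12$; in fact for $0<a<\tfrac12$ it is the non-empty interval $(2a,\,a+\tfrac12)$, and the theorem claims positivity there too. Your sufficiency argument (``$2a-b>0$ and $b-a-\tfrac12>0$, so both factors are products of positive terms'') covers only $a>\tfrac12$. For $0<a<\tfrac12$ both $2a-b$ and $b-a-\tfrac12$ are \emph{negative}, and one must check that $(2a-b)_n$ and $(b-a-\tfrac12)_n$ are each negative for every $n\ge1$ (hence their product positive). That holds because both parameters lie in $(-\tfrac12,0)$, so $\alpha+k>0$ for all $k\ge1$ and $(\alpha)_n=\alpha\cdot(\text{positive})$. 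This observation is entirely absent from your write-up, so half of the sufficiency direction is unproved.

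Second, the converse. The same observation is what rescues you there, and you never find it: once Theorem \ref{theoremB0} disposes of $b\le a$ and $b\ge 2a+\tfrac12$ (i.e.\ $c\le a$), you may assume $a<b<2a+\tfrac12$, and then the one negative parameter among $2a-b$, $b-a-\tfrac12$ automatically lies in $(-\tfrac12,0)$; its Pochhammer symbol therefore never changes sign, \emph{all} coefficients in \eqref{G7} are negative, and interpretation \eqref{G4} yields a zero on $(0,2j_{a-1/2})$. Your worry that ``a Pochhammer factor\ldots flips according to parity'' simply does not arise inside the necessity strip. The substitute routes you propose instead all fail: Lemma \ref{lemmaC} requires the source function to alternate in sign, which $\J^2_{a-1/2}$ does not (it is nonnegative with zeros); your claim that moving along the line $b+c=3a+\tfrac12$ ``decreases exactly one coordinate and does not increase the other'' is impossible, since the sum is fixed; the Theorem \ref{theoremB0}(ii)-style fractional integral runs in the wrong direction when $c>2a$ (you would need to integrate \emph{up} to $(a+\tfrac12,2a)$, but $c$ would have to decrease); and the ``monotone limit in the $L^1$-against-kernel sense'' paragraph is not an argument. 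Finally, for $a=\tfrac12$, $b\ne1$ the coefficients are all negative (one of $1-b$, $b-1$ lies in $(0,\tfrac12)$ and the other in $(-\tfrac12,0)$), not ``changing sign'' as you assert -- all-negativity is precisely what lets \eqref{G4} produce a zero; and $b=1$ gives identically $\J_0^2(x/2)$, which vanishes infinitely often.
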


\begin{proof}
In view of the necessity for positivity, it suffices to deal with
the case $\,a<b<2a + 1/2\,.$ An inspection on
Gasper's sum \eqref{G7} reveals the coefficients are all positive if
$\,2a<b<a+ 1/2\,$ for $\,0<a<1/2\,$ or $\,a+ 1/2 <b<2a\,$ for $\,a>1/2\,$ and all negative otherwise.
Hence the assertion follows and the inequality \eqref{G9} is a simple consequence of our interpretation \eqref{G3}.

In the case $\,a=1/2,\,1/2<b<3/2,\,$ \eqref{G7} reduces to
\begin{align*}
&\F\left(\frac 12\,; b, \, 2-b \,; -\frac{\,x^2}{4}\right)
\nonumber\\
&\qquad =\J_{0}^2\left(\frac x2\right) +
 2\sum_{n=1}^\infty
\frac{(1-b)_n \left(b-1\right)_n}{(n!)^2\,(b)_n \left(2-b\right)_n}
\left(\frac x4\right)^{2n} \J^2_{n}\left(\frac x2\right).
\end{align*}
If $\,b\ne 1,\,$ then the coefficients of the series on the right are easily seen to be negative and hence
the function on the left must possess at least one
zero on the interval $\left(0, \,2j_{0}\right).$ If $\,b=1,\,$ then the series vanishes and
$$\F\left(\frac 12\,; 1, \, 1\,; -\frac{\,x^2}{4}\right) = \J_{0}^2\left(\frac x2\right),$$
which possesses infinitely many zeros.
\end{proof}

\begin{remark}\label{remarkB}
In the case when positivity breaks down, it is not hard to figure out the cases of $\,a, b\,$
in which the opposite inequality
\begin{align}\label{G10}
\F\left(a\,; b, \, 3a + \frac 12 -b \,; -\frac{\,x^2}{4}\right)
<\J_{a-\frac 12}^2\left(\frac x2\right)
\end{align}
holds true, e.g., the case $\,a<b<2a,\,0<a<\frac 12\,$ or $\,a<b<a+\frac 12,\,a>\frac 12\,,$
which implies the function on the right alternates in sign and
possesses at least one zero on the interval $\,\left(0, \,2j_{a-\frac 12}\right)$.
\end{remark}

\medskip

As an illustration, we take $\,a=1/8\,$ and $\,b= 3/8\,$ to see
$$\J_{-\frac 38}^2\left(\frac x2\right)<\F\left(\frac 18\,; \frac 38, \, \frac 12 \,; -\frac{\,x^2}{4}\right)
=\Phi_1(x).$$
For the opposite monotonicity, we take $\,b= 3/16\,$ to see
$$\J_{-\frac 38}^2\left(\frac x2\right)>\F\left(\frac 18\,; \frac {3}{16}, \, \frac {11}{16}\,; -\frac{\,x^2}{4}\right)
=\Phi_2(x).$$
In Figure \ref{Fig23}, each curve from the top represents
$\,\Phi_1(x), \,\J_{-\frac 38}^2\left(\frac x2\right),\,\Phi_2(x).\,$

\medskip

\begin{figure}[!h]
 \centering
 \includegraphics[width=300pt, height=200pt]{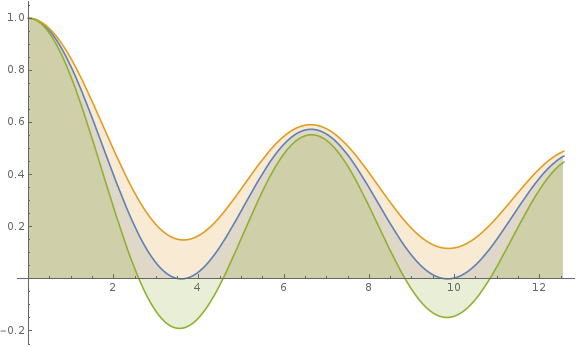}
 \caption{Monotonicity of generalized hypergeometric functions}
\label{Fig23}
\end{figure}

\paragraph{(B1)} If we consider the dilation $\,b= \rho a,\,1<\rho <2,\,$ then
\begin{equation}\label{G11}
\left(a,\, \rho a, \,(3-\rho) a + \frac 12\right)\in \mathcal{P}_{1, 2}
\quad\text{if and only if}\quad a> \frac{1}{2(\rho-1)}\,.
\end{equation}
The special case $\,\rho = 3/2\,$ gives \eqref{R4}, the result of Fields and Ismail,
where the restriction $\,a>1\,$ is sufficient and necessary.

\paragraph{(B2)} A necessary and sufficient condition for
\begin{equation*}
\left(a,\, a+ \delta, \,2a +\frac 12- \delta\right)\in \mathcal{P}_{1, 2}
\end{equation*}
is that either $\,0<\delta<\frac 12,\,0<a<\delta\,$ or $\,\delta>\frac 12,\,a>\delta.$
The special case $\,\delta = 1\,$ gives Makai's result \eqref{R2} in a sharp form.

\paragraph{(B3)} A number of interesting positivity results are obtainable
by either fixing $a$ and varying $b$ or fixing $b$ and varying $a$. For instance, we have
\begin{align*}
\left(1,\,b,\,\frac 72 -b\right) &\in \po\quad\text{if and only if}
\quad \frac 32<b<2,\\
\left(a,\,\frac 34,\,3a-\frac 14\right) &\in \po\quad\text{if and only if}
\quad \frac 14<a < \frac 38
\end{align*}
for which the former corresponds to Steinig's result \cite{St} on the positivity of Lommel's function in a sharp form.

\medskip

In the second place, if $\, \nu = b-1,\, c = a + b- 1/2\,,\,$ then our reduction
and evaluation of $S(n, \nu)$ by Saalsch\"uz's formula \eqref{G5} yield
\begin{align}\label{G13}
&\F\left(a\,; b, \, a + b -\frac 12\,; -\frac{\,x^2}{4}\right)\nonumber\\
&\qquad\quad = \J_{b-1}^2\left(\frac x2\right) +
\sum_{n=1}^\infty \frac{2n + 2b-2}{n+ 2b-2}\frac{(2b-1)_n}{n!\,(b)_n^2}\nonumber\\
&\qquad\qquad\qquad\times\,\,\frac{\left(b-a- 1/2\right)_n}{\left(a+b -1/2\right)_n}
\left(\frac x4\right)^{2n} \J^2_{n+ b -1}\left(\frac x2\right)
\end{align}
and we obtain the following positivity result by analyzing the coefficients along the same
lines of reasonings as before.

\smallskip

\begin{theorem}\label{theoremC}
For $\,a>0,\,b>0,\,$ we have
\begin{equation}\label{G14}
\left(a, \,b, \,a +b- \frac 12\right)\in\mathcal{P}_{1, 2}
\quad\text{if and only if}\quad b>a+\frac 12.
\end{equation}
Moreover, the following inequality holds true in such a case:
\begin{align}\label{G15}
\F\left(a\,; b, \, a + b-\frac 12 \,; -\frac{\,x^2}{4}\right)
>\J_{b-1}^2\left(\frac x2\right).
\end{align}
\end{theorem}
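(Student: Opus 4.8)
The plan is to read off the whole statement from Gasper's expansion \eqref{G13}, leaving the necessity half almost entirely to Theorem \ref{theoremB0}. Writing the general coefficient of $R_{b-1}$ as
$$
c_n = \frac{2n+2b-2}{n+2b-2}\,\frac{(2b-1)_n}{n!\,(b)_n^2}\,\frac{\left(b-a-\frac12\right)_n}{\left(a+b-\frac12\right)_n}\qquad(n\ge 1),
$$
the expansion reads $\F(a;b,a+b-\frac12;-x^2/4)=\J_{b-1}^2(x/2)+\sum_{n\ge1}c_n(x/4)^{2n}\J^2_{n+b-1}(x/2)$, so the task reduces to locating the parameters for which every $c_n$ is positive.

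For the sufficiency (``if''), I would assume $b>a+\frac12$. Since $a>0$ this already forces $b>\frac12$ and $a+b-\frac12>2a>0$, so I can run through the factors of $c_n$ one at a time: $2n+2b-2=2(n+b-1)$ and $n+2b-2$ are positive because $b>\frac12$; the symbols $(2b-1)_n$, $(b)_n^2$, $n!$ and $(a+b-\frac12)_n$ are positive because their base parameters are; and $\left(b-a-\frac12\right)_n>0$ exactly because $b-a-\frac12>0$. Hence every term of $R_{b-1}$ is nonnegative, and for each fixed $x>0$ it is in fact strictly positive for some $n$ — indeed $\J_b$ and $\J_{b+1}$ have no common positive zero by the interlacing property recalled in Section 2, so at least one of the $n=1$ and $n=2$ terms does not vanish. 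Thus $R_{b-1}(x)>0$ on $(0,\infty)$, which yields both $(a,b,a+b-\frac12)\in\po$ and the strict inequality \eqref{G15}, precisely as in the interpretation \eqref{G3}.

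For the necessity (``only if''), suppose $(a,b,a+b-\frac12)\in\po$. With $c=a+b-\frac12$, Theorem \ref{theoremB0} forces $b+c\ge 3a+\frac12$, i.e. $2b\ge 2a+1$, hence $b\ge a+\frac12$ (which automatically gives $b>a$ and $c>a$). It then remains to rule out the endpoint $b=a+\frac12$; but in that case $c=2a$ and $\F(a;a+\frac12,2a;-x^2/4)=\J_{a-1/2}^2(x/2)$, which vanishes at $x=2j_{a-1/2}$, contradicting strict positivity on $(0,\infty)$ — alternatively one quotes Theorem \ref{theoremA}(i) directly. Therefore $b>a+\frac12$, completing the equivalence.

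I do not expect a genuine obstacle here: the delicate computation — the Saalsch\"utz reduction of $S(n,b-1)$ that produces \eqref{G13} — is already in hand, so what is left is sign bookkeeping for the finitely many factors of $c_n$ together with a routine strict-positivity argument for $R_{b-1}$. The one point deserving a little care is that it is precisely the hypothesis $b>a+\frac12$ (not the weaker $b>a$ coming from the necessity of $c>a$) that makes $\left(b-a-\frac12\right)_n$ positive, so the positivity of $R_{b-1}$ and the non-positivity forced by Theorem \ref{theoremB0} meet exactly at $b=a+\frac12$; verifying that this borderline triple genuinely lies outside $\po$ is the last small thing to nail down.
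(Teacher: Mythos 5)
Your proof is correct and follows essentially the same route as the paper: positivity of all coefficients in the expansion \eqref{G13} when $b>a+\tfrac12$ gives both membership in $\po$ and the inequality \eqref{G15} via the interpretation \eqref{G3}, while necessity comes from Theorem \ref{theoremB0} together with the observation that the borderline $b=a+\tfrac12$ collapses the series to $\J_{a-1/2}^2(x/2)$, which has zeros. Your use of the global necessary condition $b+c\ge 3a+\tfrac12$ is a slightly more economical way to dispatch the ``only if'' direction than case-checking the sign of the coefficients for $b<a+\tfrac12$, but it is the same toolbox and the same argument in substance.
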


\paragraph{(B4)} We have $\,(a, \,a+1,\,3/2)\in\po\,$ if and only if $\,0<a<1\,$ with
\begin{align}\label{G16}
\F\left( a\,; a+1,\, \frac 32 \,; -\frac{\,x^2}{4}\right) &=
\frac\pi x\,\sum_{n=0}^\infty (2n +1) \frac{(1-a)_n}{(1+a)_n}
J^2_{n+\frac 12}\left(\frac x2\right)\nonumber\\
&>\left[\frac{\sin(x/2)}{x/2}\right]^2\,=\J_{\frac 12}^2\left(\frac x2\right).
\end{align}

As an application of \eqref{G16}, we obtain the positivity of the generalized sine integral (see Luke \cite{L})
in the form
\begin{align}\label{G16-1}
\int_0^x t^{-\delta} \sin t\, dt &= \frac{x^{2-\delta}}{2-\delta}\cdot \F\left(\frac{2-\delta}{2}\,;
\frac{4-\delta}{2}\,,\,\frac 32\,; -\frac{\,x^2}{4}\right)\nonumber\\
&=\frac{\pi x^{1-\delta}}{2-\delta}\,
\sum_{n=0}^\infty (2n +1) \frac{(\delta/2)_n}{(2-\delta/2)_n}
J^2_{n+\frac 12}\left(\frac x2\right)\nonumber\\
&> \frac{x^{2-\delta}}{2-\delta}\cdot \left[\frac{\sin(x/2)}{x/2}\right]^2\qquad(0<\delta<2)
\end{align}
(see Watson \cite{Wa}, pp. 152, for the case $\,\delta=1\,$).

\medskip

In the last place, if $\, \nu = b-1,\, c = 2a,\,$ then
our reduction and evaluation of $S(n, \nu)$ by Watson's formula \eqref{G6} read as
\begin{align}\label{G18}
\F\left(a\,; b, \, 2a\,; -\frac{\,x^2}{4}\right)
&=\J_{b-1}^2\left(\frac x2\right) +\sum_{n=1}^\infty \frac{2n + b-1}{n+ b-1}\frac{(b)_n}{n!\,(b)_{2n}^2}\nonumber\\
&\quad\times \,\,
\frac{\left(b-a- 1/2\right)_n}{\left(a+1/2\right)_n}
\left(\frac x4\right)^{4n} \J^2_{2n+ b -1}\left(\frac x2\right),
\end{align}
which yields the following result.

\smallskip

\begin{theorem}\label{theoremD}
For $\,a>0,\,b>0,\,$ we have
\begin{equation}\label{G19}
\left(a, \,b, \,2a\right)\in\mathcal{P}_{1, 2}\quad\text{if and only if}
\quad b>a+\frac 12
\end{equation}
and the following
inequality holds true in such a case:
\begin{align}\label{G20}
\F\left(a\,; b, \, 2a\,; -\frac{\,x^2}{4}\right)
>\J_{b-1}^2\left(\frac x2\right).
\end{align}
\end{theorem}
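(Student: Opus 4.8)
The plan is to extract everything from the series development \eqref{G18}, which is Gasper's formula \eqref{G1} specialized to $\,\nu = b-1,\ c = 2a.\,$ Under the standing hypothesis $\,b>a+\frac 12>\frac 12\,$ of the sufficiency part, that specialization is legitimate: $\,2\nu = 2b-2>-1\,$ is not a negative integer, and the parameters $\,b-\frac 12\,$ and $\,2a\,$ that appear as denominators when $\,S(n,\nu)\,$ is evaluated by Watson's formula \eqref{G6} are both positive, so $\,\J_{b-1}\,$ and all coefficients in \eqref{G18} are well defined.

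For necessity, suppose $\,(a,b,2a)\in\po.\,$ Applying Theorem \ref{theoremB0} with $\,c=2a,\,$ the necessary condition $\,b+c\ge 3a+\frac 12\,$ forces $\,b\ge a+\frac 12.\,$ Equality is excluded: if $\,b=a+\frac 12,\,$ then by the identity $\,\J_{a-\frac 12}^2(x/2)=\F\!\left(a\,;a+\frac 12,\,2a\,;-x^2/4\right)\,$ recorded in Section 2, the function in question equals $\,\J_{a-\frac 12}^2(x/2),\,$ which vanishes at $\,x=2j_{a-\frac 12}\,$ and hence is not strictly positive on $(0,\infty).$ Therefore $\,b>a+\frac 12.\,$ (When $\,b<a+\frac 12\,$ one may alternatively invoke Theorem \ref{theoremB0}(ii) to exhibit a zero of $\Phi$ in $\bigl(0,2j_{a-\frac 12}\bigr).$)

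For sufficiency, assume $\,b>a+\frac 12.\,$ The coefficient of $\,(x/4)^{4n}\J_{2n+b-1}^2(x/2)\,$ in \eqref{G18} is
\[
c_n=\frac{2n+b-1}{n+b-1}\cdot\frac{(b)_n}{n!\,(b)_{2n}^2}\cdot\frac{\left(b-a-\frac 12\right)_n}{\left(a+\frac 12\right)_n}\qquad(n\ge 1).
\]
Since $\,b>0,\,$ one has $\,n+b-1\ge b>0\,$ and $\,2n+b-1\ge b+1>0\,$ for $\,n\ge 1,\,$ so the first factor is positive; the Pochhammer symbols $\,(b)_n,\ (b)_{2n},\ (a+\frac 12)_n\,$ are positive because $\,b>0\,$ and $\,a+\frac 12>0\,$; and $\,(b-a-\frac 12)_n>0\,$ precisely because $\,b-a-\frac 12>0.\,$ Hence $\,c_n>0\,$ for every $\,n\ge 1\,$ (the contributions of the suppressed odd orders vanish by Watson's formula, which is why only the squares $\,\J_{2n+b-1}^2\,$ survive). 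By the interpretation \eqref{G3} of Gasper's formula, $\,(a,b,2a)\in\po\,$ and the strict inequality \eqref{G20} follows.

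There is no real obstacle here once \eqref{G18} is available — positivity reduces to the single sign observation $\,b-a-\frac 12>0.\,$ The only places that need a little care are the boundary case $\,b=a+\frac 12\,$ in the necessity part, handled by the closed form above, and the step from ``a series with nonnegative coefficients'' to strict positivity throughout $(0,\infty);$ the latter is precisely the content of the interlacing interpretation \eqref{G3} (concretely: at a positive zero $\,x_0\,$ of $\,\J_{b-1},\,$ the recurrence of Section 2 forces $\,\J_{b+1}(x_0/2)\ne 0,\,$ so the $\,n=1\,$ term of the series is strictly positive there), and so may simply be quoted.
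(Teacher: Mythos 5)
Your proof is correct and follows the paper's own route: the paper obtains Theorem \ref{theoremD} exactly by reading off the sign of the factor $\left(b-a-\tfrac 12\right)_n$ in the coefficients of \eqref{G18} and invoking the interpretation \eqref{G3}, with necessity coming from Theorem \ref{theoremB0} (equivalently, Theorem \ref{theoremA}(i)) and the boundary case $b=a+\tfrac12$ excluded because the function there equals $\J_{a-\frac 12}^2(x/2)$. Your added care about the admissibility of $\nu=b-1$ in Gasper's formula and about deducing strict positivity from the interlacing of Bessel zeros only makes explicit what the paper leaves implicit.
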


We should point out that this positivity result is already obtained
by the integral transform method in part (i) of Theorem \ref{theoremA}.
The only difference is the inequality
\eqref{G20} which gives a deeper insight on positivity.

\medskip

\begin{remark}\label{remarkC}
For $\,a<b<a+\frac 12\,,$ it is simple to deduce
\begin{align}
\F\left(a\,; b, \, a + b-\frac 12 \,; -\frac{\,x^2}{4}\right)
&<\J_{b-1}^2\left(\frac x2\right),\\
\F\left(a\,; b, \, 2a\,; -\frac{\,x^2}{4}\right)
&<\J_{b-1}^2\left(\frac x2\right)
\end{align}
from Theorems \ref{theoremC} and \ref{theoremD}
so that each function on the right alternates in sign and
possesses at least one zero on the interval $\,\left(0, \,2j_{b-1}\right)$.
\end{remark}

\section{Newton diagram of positivity}
Collecting all of our preceding results, it is straightforward to determine the regions of positivity
and non-positivity in the first quadrant for each fixed $\,a>0\,$
with the aid of Lemma \ref{lemmaA} and Lemma \ref{lemmaC}.

To state, we shall denote by $O_a$ the set of all points $\,(b, c)\in\R_+^2\,$ defined in terms of
interval notation as follows:
\begin{align}
O_a &= \left(a, \,a+\frac 12\right) \times\left[3a+\frac 12 -b, \,\infty\right)\nonumber\\
&\quad \cup  \left(2a, \,2a+\frac 12\right)\times \left[3a + \frac 12-b, \,a+\frac 12\right)\nonumber\\
&\quad \cup  \left[2a+\frac 12, \,\infty\right) \times\left(a, \,a+\frac 12\right)
\quad\text{if}\quad a\ge\frac 12\,,\label{N1}\\
O_a &= \left(a, \,2a\right) \times\left[3a+\frac 12 -b, \,\infty\right)\nonumber\\
&\quad \cup  \left(a+\frac 12, \,2a+\frac 12\right)\times \left[3a + \frac 12-b, \,2a\right)\nonumber\\
&\quad \cup  \left[2a+\frac 12, \,\infty\right) \times\left(a, \,2a\right)
\quad\text{if}\quad 0<a<\frac 12\,.\label{N2}
\end{align}

As it is standard, the Newton diagram associated to a finite set of plane points
$\,\bigl\{\left(\alpha_i,\,\beta_i\right) : i= 1, \cdots, N\bigr\}\,$
refers to the convex hull containing
$$\bigcup_{i=1}^N\,\bigl\{(x, y)\in\R^2 : x\ge \alpha_i,\,\,y\ge \beta_i\bigr\}\,.$$

\begin{theorem}\label{theoremF2}
For $\,a>0,\,$ put
$$\Phi(x)=\F\left(a\,; b, c\,; -\frac{\,x^2}{4}\right).$$
Let $P_a$ be the Newton diagram associated to
$\,\Lambda = \left\{\left(a+\frac 12,\,2a\right),\,\left(2a,\,a+\frac 12\right)\right\},\,$
$O_a$ the set defined as in \eqref{N1}, \eqref{N2} and $N_a$ the complement of $\,P_a\cup O_a\,$ in $\R_+^2$ so that
the decomposition $\,\R_+^2 = P_a\cup O_a\cup N_a\,$ holds.

\begin{itemize}
\item[\rm(i)] If $\,(b, c)\in P_a\,,$ then $\,\Phi\ge 0\,$ and strict positivity holds unless $(b, c)\in\Lambda.$
\item[\rm(ii)] If $\,(b, c)\in N_a\,,$ then $\Phi$ alternates in sign and possesses at least one zero on the interval
$(0, \infty).$
\end{itemize}
\end{theorem}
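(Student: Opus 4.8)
The plan is to assemble the decomposition $\R_+^2 = P_a \cup O_a \cup N_a$ from the explicit results already proved, treating the cases $a \ge 1/2$ and $0<a<1/2$ in parallel since the shapes of $P_a$ and $O_a$ differ only in whether $a+1/2$ or $2a$ is the larger of the two coordinates. First I would identify $P_a$ concretely: the Newton diagram of $\Lambda = \{(a+\tfrac12, 2a), (2a, a+\tfrac12)\}$ is the set of $(b,c)$ with $b \ge \min\{a+\tfrac12, 2a\}$, $c \ge \min\{a+\tfrac12, 2a\}$, and $b+c \ge 3a+\tfrac12$ (the slanted edge joining the two vertices lies on the line $b+c = 3a+\tfrac12$). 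So membership in $P_a$ is exactly the hypothesis cluster appearing in Theorem \ref{theoremA} together with the boundary line from Theorem \ref{theoremB} and the axis-parallel boundary segments.

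For part (i), I would argue that every $(b,c) \in P_a$ is reached by one of three mechanisms. If the inequality $b+c > 3a+\tfrac12$ is strict, then one of the two sign patterns $\{b \ge a+\tfrac12, c \ge 2a\}$ or $\{b \ge 2a, c \ge a+\tfrac12\}$ holds (this is a short case check using $b,c \ge \min\{a+\tfrac12,2a\}$), and Theorem \ref{theoremA} gives strict positivity. If $b+c = 3a+\tfrac12$ exactly, then $(b,c)$ lies on the segment $\Lambda$ between the two vertices; the interior of that segment is handled by Theorem \ref{theoremB} (strict positivity, since $b$ lies strictly between $a+\tfrac12$ and $2a$ — note $a \ne 1/2$ is automatic here because when $a=1/2$ the two vertices coincide and the segment degenerates to the single point $(1,1)$), while the endpoints themselves are the points of $\Lambda$, where $\Phi(x) = \J_{a-1/2}^2(x/2) \ge 0$ by the identity in Section 2, with equality at the zeros of $\J_{a-1/2}$ — this is precisely the allowed non-strict case. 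Finally, points of $P_a$ not yet covered must have $b+c \ge 3a+\tfrac12$ with at least one coordinate at its minimum value but the other strictly above what Theorem \ref{theoremA} requires; these I would reach from the vertices of $\Lambda$ by Lemma \ref{lemmaA}, increasing $b$ and/or $c$ from $(a+\tfrac12,2a)$ or $(2a,a+\tfrac12)$ while keeping $a$ fixed, which preserves (and, since at least one increment is positive, strictens) positivity.

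For part (ii), I would show that $N_a = \R_+^2 \setminus (P_a \cup O_a)$ splits into regions each controlled by a non-positivity result already established, then propagated by Lemma \ref{lemmaC}. The primitive non-positivity inputs are: Theorem \ref{theoremB0}(i) (when $b \le a$ or $c \le a$); Theorem \ref{theoremB0}(ii) (when $b \le a+\tfrac12, c \le 2a$ or $b \le 2a, c \le a+\tfrac12$ with $b+c < 3a+\tfrac12$); and the "breakdown" halves of Theorems \ref{theoremB}, \ref{theoremC}, \ref{theoremD} together with Remarks \ref{remarkB} and \ref{remarkC} (alternation on the lines $b+c = 3a+\tfrac12$, $c = a+b-\tfrac12$, $c = 2a$ when $a < b < a+\tfrac12$, and on $b+c=3a+\tfrac12$ for $a<b<2a$, $0<a<1/2$). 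The key geometric claim is that $O_a$ is carved out so that $N_a$ is exactly the closure-complement: concretely $N_a$ is the union of $\{b \le a\} \cup \{c \le a\}$, the "below the necessity line" wedge $\{b+c < 3a+\tfrac12\} \cap \{b > a, c> a\}$, and the two thin strips adjacent to $O_a$ where one coordinate lies in $(a, a+\tfrac12]$ (resp. $(a,2a]$) but the other is large — and on each such piece some point has a known alternating $\Phi$, from which Lemma \ref{lemmaC} (which only decreases $b$ or $c$ and increases $a$) propagates alternation and a zero in $(0,\infty)$ to the whole piece.

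The main obstacle I expect is the bookkeeping in part (ii): verifying that the explicitly-defined set $O_a$ in \eqref{N1}–\eqref{N2} is precisely the set of parameter points that are neither provably positive nor provably alternating by the lemmas at hand, so that its complement $N_a$ is genuinely covered. This requires carefully matching the interval endpoints in the definition of $O_a$ against the boundary lines $b = a$, $b = a+\tfrac12$, $b = 2a$, $c = 2a$, $b+c = 3a+\tfrac12$, checking the two regimes $a \ge 1/2$ and $0 < a < 1/2$ separately, and confirming at each of the finitely many "corner" configurations that Lemma \ref{lemmaC} can be applied with a valid starting point (some $(a',b',c')$ with $a' \le a$, $b' \ge b$, $c' \ge c$ for which alternation is already known). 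I anticipate that a clean way to organize this is to fix $a$, draw the half-line arrangement in the $(b,c)$-plane, and treat region by region; the positivity side (part (i)) should then be comparatively routine once the diagram is drawn.
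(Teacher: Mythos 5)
Your overall plan — identify $P_a$ explicitly as $\{(b,c): b\ge m,\ c\ge m,\ b+c\ge 3a+\tfrac12\}$ with $m=\min\{a+\tfrac12,2a\}$, and assemble both parts from Theorems \ref{theoremA}--\ref{theoremD} propagated by Lemmas \ref{lemmaA} and \ref{lemmaC} — is exactly the assembly the paper has in mind (the paper supplies no further detail). But two of your specific reductions do not go through as stated.

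In part (i), the claim that $b,c\ge m$ together with $b+c>3a+\tfrac12$ forces one of the patterns $\{b\ge a+\tfrac12,\ c\ge 2a\}$ or $\{b\ge 2a,\ c\ge a+\tfrac12\}$ is false: for $a=1$ the point $(b,c)=(1.8,1.8)$ lies in $P_1$ with $b+c=3.6>3.5$, yet $c<2a$ and $b<2a$. The entire half-open triangle $\{m\le b<M,\ m\le c<M,\ b+c>3a+\tfrac12\}$, where $M=\max\{a+\tfrac12,2a\}$, is missed both by Theorem \ref{theoremA} and by applying Lemma \ref{lemmaA} from the two vertices of $\Lambda$, which only reproduces the two closed quadrants. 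The repair is short and uses tools you already cite: drop such a point onto the line $b+c=3a+\tfrac12$ at $(b,\,3a+\tfrac12-b)$, where Theorem \ref{theoremB} (or, if $b=m$, the vertex identity $\Phi=\J_{a-1/2}^2$) gives nonnegativity, and then apply Lemma \ref{lemmaA} with $\epsilon=c-(3a+\tfrac12-b)>0$.

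The gap in part (ii) is more substantive. First, the ``thin strips where one coordinate lies in $(a,a+\tfrac12]$ but the other is large'' sit above the necessity line and are therefore $O_a$ itself, not part of $N_a$; in fact $N_a=\{b\le a\}\cup\{c\le a\}\cup\{b+c<3a+\tfrac12\}$. The real problem is the wedge $\{b>a,\ c>a,\ b+c<3a+\tfrac12\}$: your listed primitives do not cover all of it. Take $(a,b,c)=(1,1.6,1.8)$. Theorem \ref{theoremB0}(ii) requires $b\le a+\tfrac12$ or $c\le a+\tfrac12$, which fails here; and any admissible starting point for Lemma \ref{lemmaC} must have $a_0\le 1$, $b_0\ge 1.6$, $c_0\ge 1.8$, which is incompatible with Theorem \ref{theoremB0}(i) and (ii), with Remark \ref{remarkB} and the breakdown half of Theorem \ref{theoremB} (these force $b_0<a_0+\tfrac12\le\tfrac32$ or, by symmetry, $c_0<a_0+\tfrac12$, apart from the subcase $b_0>2a_0$ which forces $c_0<a_0+\tfrac12$), and with Remark \ref{remarkC} (which forces $b_0<a_0+\tfrac12$). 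What actually handles the whole wedge is the asymptotic expansion \eqref{I3}: when $\sigma=b+c-a-\tfrac12<2a$, the oscillatory term dominates the monotone term as $x\to\infty$, so $\Phi$ changes sign infinitely often. This is the very observation behind the necessity $b+c\ge 3a+\tfrac12$, and you must invoke it directly (not merely its consequence ``$\Phi$ is not positive'') to obtain alternation on all of $N_a$.
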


\smallskip

\begin{remark} For $\,(b, c)\in O_a\,,$ we do not know if $\Phi$ is positive or alternates in sign.
We include Figure \ref{Fig33} to illustrate $\,P_a, \,O_a, \,N_a\,$
in the case $\,a>\frac 12\,.$
\end{remark}

\medskip

\begin{figure}[!h]
 \centering
 \includegraphics[width=300pt, height=200pt]{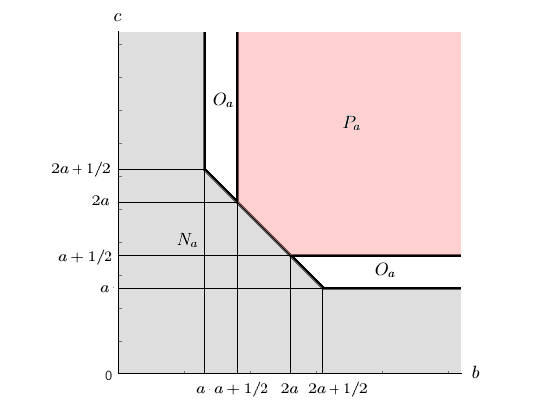}
 \caption{Newton diagram of positivity for $\,a>\frac 12$}
\label{Fig33}
\end{figure}

\medskip

As an application, we consider the problem of positivity for the Bessel integral \eqref{R0}.
By Theorem \ref{theoremB0}, the necessity region is easily seen to be
\begin{align*}
C =\left\{(\alpha, \beta): \alpha>-1,\,\,\beta\ge -\frac 12\,,\,\,-\alpha-1<\beta<\alpha+1\right\}\,.
\end{align*}
Applying Theorem \ref{theoremF2} case by case, it is straightforward to deduce
\begin{equation}
\int_0^x J_\alpha(t) t^{-\beta} dt>0\quad\text{for}\quad (\alpha, \beta)\in A = C\setminus B,
\end{equation}
as depicted in Figure \ref{Fig34}, where $B$ denotes the parallelogram given by
\begin{align*}
B =\left\{(\alpha, \beta): -1<\alpha+\beta<0, \,\,-\frac 12\le \beta<0\right\}\,.
\end{align*}

As a limiting case of certain sums of Jacobi polynomials, the positivity of \eqref{R0}
has been investigated by many authors and we refer to Askey \cite{As2} for its historical backgrounds.
We should remark that Askey also obtained the positivity region $A$ by using an interpolation
argument for which the earlier results \eqref{R1}, \eqref{R2} of Cooke  and Makai play
the roles of {\it end-points}.

\begin{figure}[!h]
 \centering
 \includegraphics[width=280pt, height=200pt]{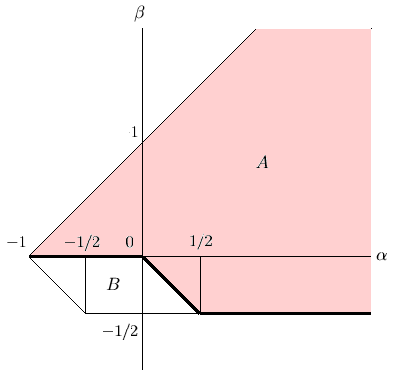}
 \caption{The region of positivity for the Bessel integral}
\label{Fig34}
\end{figure}

\section{An extension theorem}
Although it is far beyond the scope of the present work, it would be worthwhile to indicate how our
results could be extended in a simple way to the ${}_pF_{p+1}$ generalized hypergeometric functions of certain type.

\smallskip

\begin{theorem}\label{theoremH}
Suppose that $\,\alpha>0,\,\beta>0,\,\gamma>0\,$ and
$$\F\left(\alpha\,;\beta,\,\gamma\,;-\frac{\,x^2}{4}\right)\ge 0.$$
If $\,p\ge 2\,$ and $\,0<a_j<b_j\,,\,j= 1, \cdots, p-1,\,$ then
\begin{equation*}
\Phi(x)={}_pF_{p+1} \left(\begin{array}{c} \alpha,\,a_1,\cdots, a_{p-1}\\
\beta,\,\gamma,\,b_1, \cdots, b_{p-1}\end{array}\biggl| -\frac{\,x^2}{4}\right)
>0.
\end{equation*}
\end{theorem}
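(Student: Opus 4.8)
The plan is to reduce the ${}_pF_{p+1}$ case to the ${}_1F_2$ hypothesis by a finite iteration of fractional integral transforms, exactly in the spirit of Lemma~\ref{lemmaA}. The starting point is the nonnegativity of $\F(\alpha\,;\beta,\gamma\,;-x^2/4)$, which we regard as the ``base'' function. Each extra upper parameter $a_j$ paired with a larger lower parameter $b_j$ will be introduced one at a time, using the standard beta-integral identity
\begin{equation*}
{}_qF_{q+1}\!\left(\begin{array}{c} a_1,\dots,a_{r},\dots\\ b_1,\dots,b_{r},\dots\end{array}\biggl| -\frac{x^2}{4}\right)
=\frac{2}{B(a_{r},\,b_{r}-a_{r})}\int_0^1 {}_{q-1}F_{q}\!\left(\begin{array}{c}\dots,\widehat{a_r},\dots\\ \dots,\widehat{b_r},\dots\end{array}\biggl| -\frac{x^2t^2}{4}\right)(1-t^2)^{b_r-a_r-1}\,t^{2a_r-1}\,dt,
\end{equation*}
valid because $0<a_r<b_r$ guarantees both exponents $b_r-a_r-1>-1$ and $2a_r-1>-1$ are integrable and the kernel is nonnegative (indeed positive on $(0,1)$). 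Applying this $p-1$ times, peeling off the pairs $(a_{p-1},b_{p-1}),\dots,(a_1,b_1)$ in turn, expresses $\Phi(x)$ as a $(p-1)$-fold iterated integral of the base function $\F(\alpha\,;\beta,\gamma\,;-x^2s^2/4)$ against a strictly positive kernel on $(0,1)^{p-1}$.

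The second step is to upgrade nonnegativity of the base function to the strict positivity of $\Phi$. Here I would invoke precisely the argument already used in the proof of Lemma~\ref{lemmaA}: since $z\mapsto\F(\alpha\,;\beta,\gamma\,;-z^2/4)$ extends to an entire function, its zeros on $(0,\infty)$ are isolated, so for each fixed $x>0$ there is a nonempty open subinterval of the domain of integration on which the integrand is strictly positive, while it is everywhere nonnegative by hypothesis; hence the iterated integral is strictly positive. (One could also first apply Lemma~\ref{lemmaA} with $\delta=\epsilon=0$ and an arbitrarily small $\gamma$-shift to replace ``$\ge 0$'' by strict positivity, but invoking the open-interval argument directly is cleaner and avoids perturbing $\alpha$.) The ordering of the peeling is immaterial since each step only touches the pair $(a_j,b_j)$ it removes and leaves $\alpha,\beta,\gamma$ untouched until the very end.

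The only genuinely delicate point is purely bookkeeping: one must check that at every stage of the peeling the intermediate function is again a legitimate ${}_qF_{q+1}$ with $q\ge 1$ and with all parameters positive, so that the beta-integral identity applies and the induction closes at $q=1$, namely at $\F(\alpha\,;\beta,\gamma\,;-x^2s^2/4)$. This is immediate because removing one matched pair from a ${}_qF_{q+1}$ yields a ${}_{q-1}F_q$ whose remaining upper parameters are $\{\alpha,a_1,\dots,a_{p-1}\}$ minus one $a_j$ and whose lower parameters are $\{\beta,\gamma,b_1,\dots,b_{p-1}\}$ minus the corresponding $b_j$ — all still positive — and the number of lower parameters stays exactly one more than the number of upper parameters throughout. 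I would write the argument as an induction on $p$: the case $p=2$ is a single application of the identity plus the entire-function argument, and the inductive step removes the pair $(a_{p-1},b_{p-1})$ to land on a ${}_{p-1}F_p$ with the same structure, to which the inductive hypothesis (after noting its base function is still the nonnegative $\F(\alpha\,;\beta,\gamma\,;-x^2t^2/4)$, with a harmless rescaling of $x$) applies.
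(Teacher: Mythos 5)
Your proposal is correct and follows essentially the same route as the paper: the authors likewise represent $\Phi$ as a $(p-1)$-fold iterated beta-integral of the base ${}_1F_2$ function against a positive kernel and then conclude strict positivity ``by the same reasonings as before,'' i.e.\ the isolated-zeros argument from Lemma \ref{lemmaA} that you invoke explicitly. Your version merely makes the induction and the bookkeeping more explicit than the paper does.
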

\begin{proof}
By an elementary computation, it is plain to represent
\begin{align*}
\Phi(x) &= \prod_{j=1}^{p-1}\frac{2}{B(a_j,\,b_j-a_j)}\int_0^1\cdots\int_0^1
\F\left(\alpha\,;\beta,\,\gamma\,;-\frac{\,x^2t_1^2\cdots t_{p-1}^2}{4}\right)\nonumber\\
&\qquad\qquad\qquad\times\quad
\left[\prod_{j=1}^{p-1}(1- t_j^2)^{b_j-a_j-1} t_j^{2a_j -1}\right] dt_1\cdots dt_{p-1}.
\end{align*}
and the result follows by the same reasonings as we have adopted before.
\end{proof}

As an illustration, let us consider
\begin{align*}
\Phi_1(x) &= {}_2F_3\left(\frac 12\,,\,1\,; \frac 34\,,\,\frac 32\,,\,2\,; -\frac{\,x^2}{4}\right),\\
\Phi_2(x) &= {}_2F_3\left(\frac 12\,,\,1\,; \frac 54\,,\,\frac 54\,,\,3\,; -\frac{\,x^2}{4}\right).
\end{align*}
Since $\,(1,\,3/2,\,2)\,$ corresponds to the square
$\,\J_{\frac 12}^2\left(\frac x2\right),\,$ we conclude $\,\Phi_1>0\,$ and Theorem \ref{theoremF2} gives $\,(1/2, \,5/4,\,5/4)\in\po\,$
so that $\,\Phi_2>0.\,$

\bigskip

\noindent
{\bf Acknowledgements.} Yong-Kum Cho is supported by National Research Foundation of Korea Grant
funded by the Korean Government (\# 20160925).
Hera Yun is supported by the Chung-Ang University
Research Scholarship Grants in 2014-2015.

\bigskip

\noindent
Yong-Kum Cho (ykcho@cau.ac.kr) and Hera Yun (herayun06@gmail.com)

\medskip

\noindent
Department of Mathematics, College of Natural Science,
Chung-Ang University, 84 Heukseok-Ro, Dongjak-Gu, Seoul 156-756, Korea


\begin{thebibliography}{12}

\bibitem[1]{As2} R. Askey,
\emph{Problems which interest and/or annoy me}, J. Comput. Appl. Math. 48, pp. 3--15 (1993)


\bibitem[2]{AP} R. Askey and H. Pollard,
\emph{Some absolutely monotonic and completely monotone functions},
SIAM J. Math. Anal. 5, pp. 58--63 (1974)

\bibitem[3]{B} W. N. Bailey,
\emph{Generalized Hypergeometric Series},
Cambridge University Press, Cambridge (1935)

\bibitem[4]{C} R. G. Cooke,
\emph{A monotonic property of Bessel functions},
J. London Math. Soc. 12, pp. 180--185 (1937)

\bibitem[5]{E} A. Erd\'elyi (editor), \emph{Higher transcendental Functions (Volume II)},
McGraw-Hill (1953)


\bibitem[6]{FI} J. Fields and M. Ismail,
\emph{On the positivity of some ${}_1 F_2$'s},
SIAM J. Math. Anal. 6, pp. 551--559 (1975)



\bibitem[7]{Ga1} G. Gasper,
\emph{Positive integrals of Bessel functions},
SIAM J. Math. Anal. 6, pp. 868--881 (1975)


\bibitem[8]{Ga2} G. Gasper,
\emph{Positive sums of the classical orthogonal polynomials},
SIAM J. Math. Anal. 8, pp. 423--447 (1977)

\bibitem[9]{L} Y. L. Luke,
\emph{The Special Functions and Their Approxiations, Vol. I, II},
Academic Press, New York (1969)

\bibitem[10]{M} E. Makai,
\emph{On a monotonic property of certain Sturm-Liouville functions},
Acta Math. Acad. Sci. Hungar. 3, pp. 165--172 (1952)


\bibitem[11]{St} J. Steinig,
\emph{The sign of Lommel's function},
Trans. Amer. Math. Soc. 163, pp. 123--129 (1972)


\bibitem[12]{Wa} G. N. Watson,
\emph{A Treatise on the Theory of Bessel Functions},
Cambridge University Press, London (1922)



\end{thebibliography}
\end{document}